\setlist[enumerate]{leftmargin=1.5em}
\setlist[itemize]{leftmargin=1.5em}
\definecolor{green}{rgb}{0,0.8,0} 
\newcommand{\Blue}[1]{\begingroup\color{blue} #1\endgroup} 
\newtheorem{thm}{Theorem}[section]
\newtheorem{lem}[thm]{Lemma}
\newtheorem{prop}[thm]{Proposition}
\theoremstyle{definition}
\theoremstyle{remark}
\newtheorem{rmk}[thm]{Remark}
\numberwithin{equation}{section}
\newcommand{\nrm}[1]{\Vert#1\Vert}
\newcommand{\tld}[1]{\widetilde{#1}}
\newcommand{\br}[1]{\overline{#1}}
\newcommand{\nnrm}[1]{{\vert\kern-0.25ex\vert\kern-0.25ex\vert #1 
		\vert\kern-0.25ex\vert\kern-0.25ex\vert}}
\newcommand{\supp}{{\mathrm{supp}}\,}
\newcommand{\rd}{\partial}
\newcommand{\nb}{\nabla}
\newcommand{\ift}{\infty}
\newcommand{\alp}{\alpha}
\newcommand{\bt}{\beta}
\newcommand{\dlt}{\delta}
\newcommand{\tht}{\theta}
\newcommand{\omg}{\omega}
\newcommand{\Omg}{\Omega}
\newcommand{\bbN}{\mathbb N}
\newcommand{\bbR}{\mathbb R}
\newcommand{\calA}{\mathcal A}
\newcommand{\calF}{\mathcal F}
\newcommand{\calK}{\mathcal K}
\newcommand{\To}{\longrightarrow}
\begin{document}

\bibliographystyle{plain}
\title{Global regularity of some axisymmetric, single-signed 
	vorticity\\in any dimension}
\author{
	Deokwoo Lim\thanks{The Research Institute of Basic Sciences, Seoul National University, 1 Gwanak-ro, Gwanak-gu, Seoul 08826, Republic of Korea. Email: dwlim95@snu.ac.kr
} }

\date\today
\maketitle

\begin{abstract}
	We consider incompressible Euler equations in any dimension $ d\geq3 $ imposing axisymmetric symmetry without swirl. While the global regularity of smooth flows in this setting has been well-known in $ d=3 $, the same question in higher dimensions $ d\geq4 $ remains unsolved. 
	Recently, global regularity for the case $ d=4 $ with some extra decay assumption on vorticity is obtained by proving global estimate of the radial velocity. Now we prove that the vorticity with single-sign and a similar decay assumption is globally regular for any $ d\geq4 $. This is due to pointwise decay estimate of radial velocity in sufficiently large radial distance, which depends on time. The result is of confinement type for support growth, which is going back to Marchioro \cite{M94} and Iftimie--Sideris--Gamblin \cite{ISG99} for $ \bbR^{2} $. In particular, we follow the approach of Maffei--Marchioro \cite{Maffei2001} for $ d=3 $ so that we generalize the confinement into any dimension.
\end{abstract}

\renewcommand{\thefootnote}{\fnsymbol{footnote}}
\footnotetext{\emph{2020 AMS Mathematics Subject Classification:} 76B47, 35Q35 }
\footnotetext{\emph{Key words:} High dimensional Euler; vorticity distribution; global regularity; BKM criterion; support confinement}
\renewcommand{\thefootnote}{\arabic{footnote}}

\section{Introduction}\label{sec_intro}

\subsection{Axisymmetric, swirl-free Euler equations}\label{subsec_ase}

We consider the incompressible Euler equations in $ \bbR^{d} $ for dimensions $ d\geq3 $:
\begin{equation}\label{eq_Euler}
	\left\{
	\begin{aligned}
		\rd_{t}u+(u\cdot\nb)u+\nb p&=0,\\
		\nb\cdot u&=0.
	\end{aligned}
	\right.
\end{equation}
Here, our concern is \textit{axisymmetric} and \textit{swirl-free} flows, where the velocity field $ u $ takes the form
\begin{equation}\label{eq_axisymformofu}
	u=u^{r}(r,z)e^{r}+u^{z}(r,z)e^{z},
\end{equation}
in the cylindrical coordinate system $ (r,\tht_{1},\tht_{2},\cdots,\tht_{d-3},\phi,z) $ in $ \bbR^{d} $. Now we denote the scalar vorticity $ \omg $, which is defined as $ \omg:=\rd_{z}u^{r}-\rd_{r}u^{z} $. Then the Euler equations \eqref{eq_Euler} becomes the following vorticity equation:

\begin{align}
	\rd_{t}\omg+u\cdot\nb\omg&=\frac{(d-2)u^{r}}{r}\omg,\label{eq_vortformNd}\\
\end{align}
with
\begin{equation}\label{eq_urpsi}
	\begin{split}
		u^{r}&=-\frac{1}{r^{d-2}}\rd_{z}\psi, \qquad u^{z}=\frac{1}{r^{d-2}}\rd_{r}\psi,
	\end{split}
\end{equation}
where $ \psi $ is the scalar stream function that is given in the next section. 
In addition, the divergence-free condition in $ \bbR^{d} $ can be written as
$$ \nb\cdot u=\rd_{r}u^{r}+\rd_{z}u^{z}+\frac{d-2}{r}u^{r}=0. $$


\noindent Interestingly, if we define the \textit{relative} vorticity as $ \xi:=r^{-(d-2)}\omg $, the vorticity equation \eqref{eq_vortformNd} and the divergence-free condition above lead $ \xi $ to satisfy the following simple transport equation
\begin{equation}\label{eq_transport}
	\rd_{t}\xi+u\cdot\nb\xi=0,
\end{equation}
which makes the quantity $ \nrm{\xi}_{L^{p}(\bbR^{d})} $ to be conserved in time for any $ p\in[1,\ift] $.








\subsection{Main result}\label{subsec_result}

%
%
In $ d=3 $, the global regularity of axisymmetric and swirl-free smooth solutions of the Euler equations \eqref{eq_Euler} was established by Ukhovskii--Yudovich \cite{UY1968} (also see Danchin \cite{Danaxi}). 
%
%
%
%
The following theorem presents that the single-sign and the compact support of $ \omg $ implies global regularity of $ \omg $ for any $ d\geq3 $. 
We assume $ u_{0}\in H^{s}(\bbR^{d}) $ for some $ s>1+d/2 $ to use the local well-posedness (\cite{Ka,KL,Miller}) of the corresponding solution $ u $ of \eqref{eq_Euler}.


\begin{thm}\label{thm:3}
	Let $ d\geq3 $, and assume that $ \omg_{0} $ is single-signed, compactly supported in $ \bbR^{d} $, and satisfies \linebreak $ r^{-(d-2)}\omg_{0}\in L^{\ift}(\bbR^{d}) $. Then 
	the corresponding solution $ \omg $ of \eqref{eq_vortformNd} is global in time. 
	In particular, for any $ t\geq0 $, it satisfies
	\begin{equation}\label{eq_omgliftestRd}
		\nrm{\omg(t)}_{L^{\ift}(\bbR^{d})}\leq C[(1+t)\ln(e+t)]^{(d-2)/(d+1)},
	\end{equation}
	for some $ C>0 $ depending only on $ d $, $ \nrm{r\omg_{0}}_{L^{1}(\bbR^{d})} $, $ \nrm{r^{-(d-2)}\omg_{0}}_{L^{\ift}(\bbR^{d})} $, and $ S_{0}:=\sup\lbrace r>0 : (r,z)\in\supp\omg_{0}\rbrace $.
\end{thm}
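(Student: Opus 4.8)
The plan is to reduce global regularity to a confinement estimate for the radial extent of the vorticity's support, and to derive that confinement from a pointwise decay estimate for the radial velocity $u^{r}$ at large radial distance, following the scheme of Maffei--Marchioro for $d=3$ and generalizing it to all $d$. By the local well-posedness in $H^{s}$, $s>1+d/2$, together with a Beale--Kato--Majda type criterion in $\bbR^{d}$, it suffices to bound $\int_{0}^{T}\nrm{\omg(t)}_{L^{\ift}(\bbR^{d})}\,\ud t$ on each finite interval (for swirl-free axisymmetric flow the vorticity $2$-form equals $-\omg\,\ud r\wedge\ud z$, so its pointwise norm is $|\omg|$). Since $\xi=r^{-(d-2)}\omg$ solves the transport equation \eqref{eq_transport} for the divergence-free field $u$, we have $\nrm{\xi(t)}_{L^{\ift}}=\nrm{\xi_{0}}_{L^{\ift}}$ and $\supp\omg(t)$ is carried by the flow map; hence $\nrm{\omg(t)}_{L^{\ift}(\bbR^{d})}\le\nrm{\xi_{0}}_{L^{\ift}}\,R(t)^{d-2}$, where $R(t):=\sup\{r>0:(r,z)\in\supp\omg(t)\}$ and $R(0)=S_{0}$, and the theorem reduces to proving $R(t)\aleq[(1+t)\ln(e+t)]^{1/(d+1)}$.

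Next I record a conserved moment. The divergence-free condition $\rd_{r}u^{r}+\rd_{z}u^{z}+\tfrac{d-2}{r}u^{r}=0$ turns \eqref{eq_vortformNd} into the planar conservation law $\rd_{t}\omg+\nb_{(r,z)}\cdot(u\,\omg)=0$; testing this against $r^{d-1}$ yields $\tfrac{\ud}{\ud t}\iint r^{d-1}\omg\,\ud r\,\ud z=(d-1)\iint r^{d-2}u^{r}\omg\,\ud r\,\ud z$, which vanishes after symmetrizing the resulting double integral in $(r,z)\leftrightarrow(r',z')$ (the reweighted half-plane Biot--Savart kernel of $u^{r}$ is antisymmetric under that exchange). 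Thus the ``impulse'' $I:=\iint_{r>0}r^{d-1}\omg\,\ud r\,\ud z=c_{d}^{-1}\nrm{r\omg_{0}}_{L^{1}(\bbR^{d})}$ is conserved, and by Chebyshev the planar vorticity mass beyond radius $\rho$ is $\aleq I/\rho^{d-1}$, uniformly in time; this is the smallness that will control the ``tip'' of the support.

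The core is the decay estimate for $u^{r}$. Writing $u^{r}=-r^{-(d-2)}\rd_{z}\psi$ and using the Green's function $G$ of $L=\rd_{r}^{2}-\tfrac{d-2}{r}\rd_{r}+\rd_{z}^{2}$, whose partial Fourier transform $\wht{G}_{k}(r,r')$ in $z$ is built from modified Bessel functions of order $(d-1)/2$ and satisfies $|\wht{G}_{k}(R,r')|\aleq r'$ for $r'\le R/2$, I evaluate $u^{r}$ on the outer boundary $r=R(t)$ and split $\omg(t)$ at radius $R(t)/2$. The inner part then contributes $\aleq I\,R(t)^{-d}$: the bound $|\wht{G}_{k}|\aleq r'$ converts the prefactor $r^{-(d-2)}$, after the $k$- and $z$-integrations, into $R(t)^{-d}$ times the truncated moment $\iint r^{d-1}\omg\le I$. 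For the outer ``tip'' part ($r'>R(t)/2$), where $\wht{G}_{k}$ degenerates to a two-dimensional kernel and the planar mass there is only $\aleq I\,R(t)^{-(d-1)}$, a bare two-dimensional Biot--Savart estimate is too lossy; here one closes the bound by feeding the a priori confinement back into the Chebyshev estimate (a bootstrap), carried out simultaneously with the analogous estimate for $u^{z}$ that controls the axial width of $\supp\omg(t)$. These combine into a differential inequality $\dot R(t)\aleq R(t)^{-d}$, up to logarithmic factors in $t$, with constants depending only on $d$, $\nrm{r\omg_{0}}_{L^{1}}$, $\nrm{r^{-(d-2)}\omg_{0}}_{L^{\ift}}$ and $S_{0}$; integrating it gives $R(t)\aleq[(1+t)\ln(e+t)]^{1/(d+1)}$, whence \eqref{eq_omgliftestRd}.

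The step I expect to be the main obstacle is the ``tip'' estimate above: ruling out a thin, rapidly escaping filament of vorticity that nonetheless carries enough circulation to sustain its own outward drift, together with the coupled control of the axial extent of the support; this is also where the logarithmic factor originates. For $d\ge4$ the difficulty appears already in showing that $R(t)$ remains finite for all $t$, since the crude bound $\dot R\aleq\nrm{u}_{L^{\ift}}\aleq R^{d-2}$ by itself permits finite-time blow-up; accordingly the whole argument is run as a continuity/bootstrap argument on the maximal time interval on which the conjectured bound for $R(t)$ holds.
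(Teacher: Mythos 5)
Your overall strategy agrees with the paper's: reduce global regularity to the radial confinement bound $R(t)\lesssim[(1+t)\ln(e+t)]^{1/(d+1)}$ via the BKM criterion and conservation of $\nrm{\xi}_{L^\ift}$, exploit the conserved radial impulse $I\simeq\iint r^{d-1}\omg\,\ud r\,\ud z$, and prove a pointwise decay of $u^r$ at $r\approx R(t)$ by splitting the Biot--Savart integral at $\br r=r/2$. The near/far split and the $I\,r^{-d}$ contribution from $|\br r-r|>r/2$ are exactly the paper's Lemma~\ref{lem_mafmar}.

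The gap is in the control of the ``tip'' contribution, which is precisely the heart of the argument. Lemma~\ref{lem_mafmar} yields $|u^r|\lesssim r^{-d}+(r^{d-2}+1)\,m_{r/2}(t)^{1/2}$ with $m_{r/2}(t)=\iint_{\br r\ge r/2}\omg$. The Chebyshev bound on the conserved impulse gives only $m_{r/2}(t)\lesssim I\,r^{-(d-1)}$, so the second term is $\sim r^{(d-3)/2}$, which is $O(1)$ for $d=3$ and \emph{grows} for $d\ge4$; Chebyshev alone can never close the estimate. You correctly observe this is too lossy, but your proposed fix --- ``feeding the a priori confinement back into the Chebyshev estimate (a bootstrap), carried out simultaneously with the analogous estimate for $u^z$ that controls the axial width'' --- does not supply the needed decay: knowing $\supp\omg(t)\subset\{r\le R(t)\}$ does not improve $m_{r/2}(t)$ for $r<2R(t)$ beyond Chebyshev, and the paper controls nothing about the axial extent. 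What is actually required (and what Maffei--Marchioro do in $d=3$, generalized here in Lemmas~\ref{lem_mafmar2}--\ref{lem_techlem3} and Proposition~\ref{prop_mafmar}) is a Gronwall-type \emph{moment iteration}: differentiate a smoothly cut-off tail moment in time, symmetrize the double integral using the antisymmetry of the $u^r$ kernel, and obtain $m_{r_2}(t)\le J_d(r_1,r_2)\int_0^t m_{r_1}(s)\,\ud s$ with $J_d(r_1,r_2)\sim r_2/((r_2-r_1)^2 r_1^{d})$; iterating $p$ times across a dyadic shell gives $m_r(t)\lesssim (Cp^2 t/r^{d+1})^p\,m_{r/2}$, and optimizing $p\approx\ln r$ produces \emph{super-algebraic} decay $m_r(t)\le r^{-q}$ for any $q$ once $r\gtrsim[(1+t)\ln(e+t)]^{1/(d+1)}$. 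Taking $q=4d-4$ then makes the tip term $\lesssim r^{-d}$ as well, giving $\dot R\lesssim R^{-d}$. Without this iteration --- which your sketch does not mention --- the differential inequality you assert does not follow.

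As a minor point, the worry that $R(t)$ could fail to be finite is not the issue: $R(t)$ is finite as long as the smooth solution exists, since the support is transported by a Lipschitz flow; the obstruction to global existence is the BKM integral, which is exactly what the confinement rate resolves.
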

\begin{rmk}\label{rmk_actualest}
	Theorem \ref{thm:3} is a direct consequence of the following estimate of the support size in the radial direction
	\begin{equation}\label{eq_defofSt}
		S(t):=\sup_{s \in [0,t]} \sup\lbrace r>0 : (r,z)\in\supp \omg(s)\rbrace,
	\end{equation}
	with $ S(0)=S_{0} $:
	\begin{equation}\label{eq_StestinRd}
		S(t)\leq C[(1+t)\ln(e+t)]^{1/(d+1)}\quad\text{for any}\quad t\geq0,
	\end{equation}
	for some $ C>0 $, which depends only on the above four quantities. Indeed, using the conservation of $ \nrm{\xi(t)}_{L^{\ift}(\bbR^{d})}=\nrm{r^{-(d-2)}\omg(t)}_{L^{\ift}(\bbR^{d})} $ in time and \eqref{eq_StestinRd}, we obtain \eqref{eq_omgliftestRd}:
	$$ \nrm{\omg(t)}_{L^\infty(\bbR^d)} \leq \bigg\|\frac{\omg(t)}{r^{d-2}}\bigg\|_{L^\infty(\bbR^d)}  \cdot S(t)^{d-2}= \bigg\|\frac{\omg_{0}}{r^{d-2}}\bigg\|_{L^\infty(\bbR^d)}  \cdot S(t)^{d-2}\leq C[(1+t)\ln(e+t)]^{(d-2)/(d+1)}. $$
\end{rmk}

\subsection{Literature review and key ideas}\label{subsec_litkey}

\noindent \textbf{BKM criterion in general dimensions and axisymmetric, swirl-free solutions.} The local well-posedness of a solution $ u\in C\big([0,T_{\text{max}});H^{s}(\bbR^{d})\big)\cap C^{1}\big([0,T_{\text{max}});H^{s-1}(\bbR^{d})\big) $ of \eqref{eq_Euler} for $ s>1+d/2 $ is established in Kato \cite{Ka}, Kato--Lai \cite{KL} (also see Miller \cite{Miller}), where $ T_{\text{max}}\in(0,\ift] $ is the maximal time of existence of $ u $. Naturally, one can ask whether $ T_{\text{max}}<\ift $ could hold in certain cases or not. The Beale--Kato--Majda criterion provides an answer for that question (\cite{BKM} for $ d=3 $, \cite{KaPo} for any $ d\geq3 $). In 3D, it says that $ T_{\text{max}}=\ift $ if for any $ T\in(0,\ift) $, there exists $ M\in(0,\ift) $ such that the vorticity field $ \nb\times u $ satisfies
$$ \int_{0}^{T}\nrm{\nb\times u(t)}_{L^{\ift}(\bbR^{3})}dt\leq M. $$
In higher dimensions, the integrand is replaced by the $ L^{\ift}(\bbR^{d}) $-norm of the rotation matrix $ \Omg:=\frac{1}{2}[(\nb u)-(\nb u)^{\text{tr}}] $.  
On the other hand, when we consider axisymmetric and swirl-free solutions, 
the rotation matrix is expressed as
\begin{equation*}
	\Omg=\frac{1}{2}\omg(e^{r}\otimes e^{z}-e^{z}\otimes e^{r}),
\end{equation*}
where $ e^{r}\otimes e^{z} $ is the tensor product between $ e^{r} $ and $ e^{z} $, so the term $ \nrm{\Omg}_{L^{\ift}(\bbR^{d})} $ becomes equivalent with the norm $ \nrm{\omg}_{L^{\ift}(\bbR^{d})} $ of the scalar vorticity $ \omg $. That is, showing that $ \nrm{\omg(t)}_{L^{\ift}(\bbR^{d})} $ is bounded by some regular function of $ t $ is sufficient enough to prove the global regularity of $ \omg $.

\medskip

\noindent The big difference between the 3D case and higher dimensional case is that in $ d\geq4 $, the relative vorticity $ \xi=r^{-(d-2)}\omg $ is, in general, not in $ L^{\ift}(\bbR^{d}) $ even when $ \omg $ is smooth. This is one of the reasons why the global regularity of axisymmetric, swirl-free smooth solutions of \eqref{eq_Euler} in any $ d\geq4 $ is still unknown.

\medskip



\noindent\textbf{Support growth of a single-signed vorticity in 2D.} 
When the vorticity is allowed to be single-signed, the confinement argument of Marchioro \cite{M94} says that one can improve the support growth estimate from a \textit{pointwise} decaying estimate of the radial component $ \frac{x}{|x|}\cdot u $ of the velocity field. For a vortex patch in $ \bbR^{2} $, as in \cite{M94}, one may simply prove that the support is contained in the region $ \lbrace |x|\leq C(1+t)^{1/2}\rbrace $. 
There are two key ideas in showing this estimate. First, the angular impulse
$$ \int_{\bbR^{2}}|x|^{2}\omg(x)dx,
$$
which is conserved in time, is equal to the norm $ \nrm{|x|^{2}\omg}_{L^{1}(\bbR^{2})} $, when we have only one sign. Next, the conservation implies the following time-independent estimate in the local region:
$$ \int_{||x|-R|\leq R/2}\omg(t,x)dx\leq\int_{|x|> R/2}\omg(t,x)dx\lesssim\frac{1}{R^{2}}\int_{\bbR^{2}}|x|^{2}\omg_{0}(x)dx. $$
This enables us to obtain a decay rate of $ \frac{x}{|x|}\cdot u $. The simple estimate with exponent $ 1/2 $ is improved in \cite{M94} with exponent $ 1/3 $ and by Iftimie--Sideris--Gamblin \cite{ISG99} with exponent $ 1/4 $, involving a logarithmic term. 
Two important ideas were used to obtain these refinements. 
One is the dynamics of $ \omg $, that is, $ \omg $ is a solution of the 2D Euler equations. This is used to calculate the time-derivative of the term
$$ \frac{d}{dt}\int_{\bbR^{2}}\phi_{R}(y)\omg(t,y)dy=\int_{\bbR^{2}}\nb\phi_{R}(y)\cdot u(t,y)\omg(t,y)dy, $$
where $ \phi_{R} $ is a suitable smooth, non-negative, bounded, radial, and increasing function that satisfies $ 1_{\lbrace|y|>R\rbrace}\leq\phi_{R} $. The other is the symmetrization of the term in the right-hand side, which is a double-integral:
$$ \int_{\bbR^{2}}\nb\phi_{R}(y)\cdot u(t,y)\omg(t,y)dy\simeq\int_{\bbR^{2}}\int_{\bbR^{2}}\calK_{R}(y,\br{y})\omg(t,\br{y})\omg(t,y)d\br{y}dy, $$
for some kernel $ \calK_{R} $. The function $ \phi_{R} $ is an approximation of the simple function $ 1_{\lbrace|x|>R\rbrace} $ in some sense (such as $ L^{1} $), so roughly speaking, what we have is an estimate of the term
$$ \frac{d}{dt}\int_{|x|>R}\omg(t,x)dx. $$
Then we make $ R $ to depend on $ t $. We refer the reader to Serfati \cite{Serfati_pre} in $ \bbR^{2} $, and Iftimie \cite{Iftimie1999}, Iftimie--Lopes Filho--Nussenzveig Lopes \cite{ILL2003,ILL2007}, and Choi--Denisov \cite{CD2019} in other 2D domains as well.
\medskip

\noindent\textbf{Support growth of a single-signed vorticity in higher dimensions.} 
In the 3D axisymmetric, swirl-free setting with single-signed, compactly supported vorticity in $ L^{\ift}(\bbR^{3}) $, Maffei--Marchioro \cite{Maffei2001} followed the spirit of the above argument to show that the vorticity is contained in the region $ \lbrace r\leq C(1+t)^{1/4}\ln (e+t)\rbrace $. This is because the fluid impulse
$$ \int_{\bbR^{3}}x\times\omg(x)dx $$
is conserved in time, and in the axisymmetric, swirl-free setting, this yields the time-preservation of the radial impulse
$$ \int_{\bbR^{3}}r\omg(x)dx\simeq \int_{0}^{\ift}\int_{-\ift}^{\ift}r^{2}\omg(r,z)dzdr. $$
Again, by the sign condition of $ \omg $, this quantity is the same as the term $ \nrm{r\omg}_{L^{1}(\bbR^{3})} $. 
\medskip

\noindent We slightly improve the logarithmic power of \cite{Maffei2001} for $ d=3 $ and generalize this argument up to any 
higher dimensional case $ d\geq3 $ to get the 
bound \eqref{eq_StestinRd}. 
Once again, this is thanks to the conservation of the radial impulse:
$$ \int_{\bbR^{d}}r\omg(x) dx\simeq_{d}\int_{0}^{\ift}\int_{-\ift}^{\ift}r^{d-1}\omg(r,z)dzdr. $$
While this quantity is conserved in general, it is equivalent with the norm $ \nrm{r\omg}_{L^{1}(\bbR^{d})} $ only when $ \omg $ is single-signed.
\medskip

\noindent \textbf{Related works in higher dimensions.} In the previous work of the author in collaboration with Choi and Jeong \cite{CJLglobal22}, 
we showed the global regularity in $ d=4 $ of axisymmetric, swirl-free solution $ \omg $, which shows decay as $ r\to\ift $ and goes to $ 0 $ sufficiently fast as $ r\to0 $. In addition, we proved that if $ \omg $ is single-signed and compactly supported, then it is globally regular, at least up to $ d\leq 7 $. Such restriction in dimension was made because we used the Feng--Sverak (\cite{FeSv}) type global estimate of $ u^{r} $
$$ \nrm{u^{r}}_{L^{\ift}(\bbR^{d})}\lesssim_{d}\nrm{r^{d-2}\omg}_{L^{1}(\bbR^{d})}^{1/4}\bigg\|\frac{\omg}{r^{d-2}}\bigg\|_{L^{1}(\bbR^{d})}^{1/4}\bigg\|\frac{\omg}{r^{d-2}}\bigg\|_{L^{\ift}(\bbR^{d})}^{1/2}, $$
which gave us for any $ t\geq0 $,
$$ S'(t)\lesssim_{d} S(t)^{(d-3)/4}\nrm{r\omg_{0}}_{L^{1}(\bbR^{d})}^{1/4}
$$
In the next section, 
we overcome this restriction $ d\leq7 $ in dimension by using the confinement argument.

\medskip

\noindent 
Recently, Gustafson--Miller--Tsai \cite{GMT2023} obtained an upper and lower bound of the radial impulse 
for global time in $ d=3, 4 $, and for local time in any $ d\geq5 $. In particular, their lower bound improves the vorticity growth result by Choi--Jeong \cite{CJ-axi} in $ d=3 $. As a note, for the case $ d=3 $, Childress--Gilbert--Valiant \cite{ChilGil1} and Childress--Gilbert \cite{ChilGil2} suggest that $ \nrm{\omg(t)}_{L^{\ift}(\bbR^{3})} $ and its related quantities can blow up in infinite time.

\medskip

\noindent In section \ref{sec_supportpatch}, we shall prove a time-independent pointwise estimate of $ u^{r} $ when its associated relative vorticity is a compactly supported patch as a warm-up calculation. We included this estimate to convince the readers that global regularity is very natural for one-signed vorticity. In section \ref{sec_singlesign}, using the dynamics of the Euler flow, we will show several technical lemmas that are required for proving a refined pointwise estimate of $ u^{r} $, which holds for sufficiently large $ r=r(t)>0 $. 
In section \ref{sec_pfmain}, the last section, we finish the proof of this estimate and show Theorem \ref{thm:3}.

\section{Support growth of a patch for any $ d\geq3 $}\label{sec_supportpatch}

\subsection{The explicit form of $ u^{r} $}\label{subsec_patchexp}

We let $ d\geq3 $ and denote the $ (r,z) $-half plane as
$$\Pi := \left\{ (r,z) : r\ge0, z \in \bbR \right\}.$$
First, let us figure out the explicit form of $ u^{r} $. To do this, we need the exact form of the scalar stream function $ \psi $, where its precise derivation can be found in \cite[Sec. 3]{Miller}. 
The stream function $ \psi $ is given as
$$ \psi(r,z) = \iint_{\Pi} G_{d}(r,z,\br{r},\br{z}) \omg(\br{r},\br{z}) d\bar{z} d\bar{r}, $$
with the kernel
\begin{equation*}
	\begin{split}
		G_{d}(r,z,\br{r},\br{z})&:=c_{d}\int_{0}^{\pi}\frac{(r\br{r})^{d-2}\cos\tht\sin^{d-3}\tht}{[r^{2}+\br{r}^{2}-2r\br{r}\cos\tht+(z-\br{z})^{2}]^{d/2-1}}d\tht\\
		&=c_{d}(r\br{r})^{d/2-1}F_{d}\bigg(\frac{(r-\br{r})^{2}+(z-\br{z})^{2}}{r\br{r}}\bigg),
	\end{split}
\end{equation*}
for some constant $ c_{d}>0 $ and
\begin{equation}\label{eq_Fd}
	F_{d}(s):=\int_{0}^{\pi}\frac{\cos\tht\sin^{d-3}\tht}{[2(1-\cos\tht)+s]^{d/2-1}}d\tht,\quad s>0.
\end{equation}
Then from \eqref{eq_urpsi}, we can obtain the explicit form of $ u^{r} $:
\begin{equation}\label{eq_urform}
	\begin{split}
		u^{r}(r,z)&=-\frac{1}{r^{d-2}}\iint_{\Pi}\rd_{z}G_{d}(r,z,\br{r},\br{z})\omg(\br{r},\br{z})d\br{z}d\br{r} \simeq_{d}\iint_{\Pi}\frac{\br{r}^{d/2-2}(z-\br{z})}{r^{d/2}}F_{d}'\bigg(\frac{(r-\br{r})^{2}+(z-\br{z})^{2}}{r\br{r}}\bigg)\omg(\br{r},\br{z})d\br{z}d\br{r}.
	\end{split}
\end{equation}

\noindent The following is a key estimate of $ F_{d}' $ from \cite{CJLglobal22}. This is a generalization of the estimate in $ d=3 $ from \cite{FeSv}. Also see the recent paper \cite{GMT2023} as well.
\begin{lem}[{{\cite[Lem. 2.2]{CJLglobal22}}}]\label{lem_Fd'est}
	$ F_{d}' $ satisfies
	\begin{equation}\label{eq_Fdprime}
		|F_{d}'(s)|\lesssim_{d}\min\bigg\lbrace\frac{1}{s}, \frac{1}{s^{d/2+1}}\bigg\rbrace,\quad s>0.
	\end{equation}
\end{lem}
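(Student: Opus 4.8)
The plan is to differentiate $F_{d}$ under the integral sign and then estimate the resulting integral separately in the two regimes $s\le 1$ and $s\ge 1$. Since the integrand in \eqref{eq_Fd} is smooth in $s$, uniformly for $\tht\in[0,\pi]$ and $s$ in any compact subset of $(0,\ift)$, Leibniz's rule gives
\[
F_{d}'(s)=-\Big(\tfrac d2-1\Big)\int_{0}^{\pi}\frac{\cos\tht\,\sin^{d-3}\tht}{[2(1-\cos\tht)+s]^{d/2}}\,d\tht,\qquad s>0,
\]
so the whole problem reduces to bounding the absolute value of this last integral by $\min\{1/s,\,1/s^{d/2+1}\}$ up to a $d$-dependent constant. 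Since $\min\{1/s,1/s^{d/2+1}\}$ equals $1/s$ for $0<s\le 1$ and $1/s^{d/2+1}$ for $s\ge 1$, it suffices to prove $|F_{d}'(s)|\lesssim_{d}1/s$ when $s\le 1$ and $|F_{d}'(s)|\lesssim_{d}1/s^{d/2+1}$ when $s\ge 1$.

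For the regime $s\le 1$ I would bound $|\cos\tht|\le 1$ and split the integral at $\tht=\pi/2$. On $[\pi/2,\pi]$ one has $2(1-\cos\tht)\ge 2$, so the denominator is bounded below and this piece is $\lesssim_{d}1\le 1/s$. On $[0,\pi/2]$ use the elementary comparisons $\sin\tht\simeq\tht$ and $1-\cos\tht\simeq\tht^{2}$ to get
\[
\int_{0}^{\pi/2}\frac{\sin^{d-3}\tht}{[2(1-\cos\tht)+s]^{d/2}}\,d\tht\lesssim_{d}\int_{0}^{\pi/2}\frac{\tht^{d-3}}{(\tht^{2}+s)^{d/2}}\,d\tht,
\]
and the substitution $\tht=\sqrt{s}\,\phi$ turns the right-hand side into $s^{-1}\int_{0}^{\pi/(2\sqrt s)}\phi^{d-3}(\phi^{2}+1)^{-d/2}\,d\phi\le s^{-1}\int_{0}^{\ift}\phi^{d-3}(\phi^{2}+1)^{-d/2}\,d\phi$; the last integral is a finite constant, the integrand being $O(\phi^{d-3})$ near $0$ (integrable since $d\geq3$) and $O(\phi^{-3})$ at infinity. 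This yields the bound $1/s$.

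For the regime $s\ge 1$ the naive estimate only gives $1/s^{d/2}$, so one must exploit a cancellation: since $\frac{d}{d\tht}\big(\tfrac{1}{d-2}\sin^{d-2}\tht\big)=\cos\tht\,\sin^{d-3}\tht$ and $\sin^{d-2}\tht$ vanishes at $\tht=0,\pi$, we have $\int_{0}^{\pi}\cos\tht\,\sin^{d-3}\tht\,d\tht=0$. Subtracting the constant $s^{-d/2}$ inside the integral,
\[
F_{d}'(s)=-\Big(\tfrac d2-1\Big)\int_{0}^{\pi}\cos\tht\,\sin^{d-3}\tht\,\Big(\frac{1}{[2(1-\cos\tht)+s]^{d/2}}-\frac{1}{s^{d/2}}\Big)\,d\tht,
\]
and since $0\le 2(1-\cos\tht)\le 4\le 4s$ for $s\ge 1$, the mean value theorem applied to $t\mapsto t^{-d/2}$ on $[s,s+4]\subset[s,5s]$ gives $\big|[2(1-\cos\tht)+s]^{-d/2}-s^{-d/2}\big|\lesssim_{d}(1-\cos\tht)\,s^{-d/2-1}$. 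Plugging this in bounds $|F_{d}'(s)|$ by $s^{-d/2-1}$ times the finite constant $\int_{0}^{\pi}|\cos\tht|\sin^{d-3}\tht\,(1-\cos\tht)\,d\tht$. The main obstacle is precisely this last point: seeing that the extra power of $s$ in the large-$s$ bound comes from the vanishing mean $\int_{0}^{\pi}\cos\tht\sin^{d-3}\tht\,d\tht=0$, and that the mean-value step requires $s$ to be bounded below so that $2(1-\cos\tht)+s$ stays comparable to $s$ uniformly in $\tht$; for small $s$ no such cancellation is available (nor needed), which is why the two regimes call for genuinely different estimates.
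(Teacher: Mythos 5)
Your proof is correct. The paper cites this lemma from \cite{CJLglobal22} without reproducing the argument, noting only that it was obtained ``by expanding $F_{d}'$ near $s=0$ and $s=\ift$''; your small-$s$ bound via the scaling $\tht=\sqrt{s}\,\phi$ and your large-$s$ bound via the vanishing of $\int_{0}^{\pi}\cos\tht\sin^{d-3}\tht\,d\tht$ followed by the mean value theorem are precisely that two-sided expansion made rigorous, and every step (justification of differentiation under the integral, integrability of $\phi^{d-3}(\phi^{2}+1)^{-d/2}$ for $d\geq3$, and $\xi\geq s$ in the MVT) checks out.
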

\noindent This was obtained by expanding $ F_{d}' $ near $ s=0 $ and $ s=\ift $. 
We use this estimate of $ F_{d}' $ to obtain a \textit{pointwise} estimate of $ u^{r} $ and show Theorem \ref{thm:3}.

\subsection{A time-independent pointwise estimate of $ u^{r} $ for a patch}\label{subsec_patchtimeind}

If we are interested only in global regularity with a rough estimate, then adapting the early work of \cite{M94} for 2D vortex patch, we can generate a time-independent pointwise estimate of $ u^{r} $, 
which holds for \textit{any} $ (r,z)\in\Pi $ when the relative vorticity $ \xi=r^{-(d-2)}\omg $ is given as a patch $ 1_{\calA}, \ \calA\subset \Pi $.

\begin{prop}\label{prop_simpleur}
	Let $ d\geq3 $ and let $ \xi=1_{\calA} $ for some compact set $ \calA\subset\Pi
	$. 
	Then there exists $ C>0 $ such that for any $ (r,z)\in\Pi $, we have
	\begin{equation}\label{eq_simpleur}
		|u^{r}(r,z)|\leq C\bigg[\frac{1}{r^{d}}
		+\frac{1}{\sqrt{r}}\bigg],
	\end{equation}
	where $ C>0 $ depends only on $ d $, $ \nrm{r^{d-2}\xi}_{L^{1}(\Pi)} $, and $ \nrm{r^{2d-3}\xi}_{L^{1}(\Pi)} $. 
\end{prop}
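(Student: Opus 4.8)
Write $\omg(\br r,\br z)=\br r^{\,d-2}\,1_{\calA}(\br r,\br z)$ and set $\rho:=\sqrt{(r-\br r)^2+(z-\br z)^2}$, the planar distance in $\Pi$, so the argument of $F_d'$ in \eqref{eq_urform} is $s=\rho^2/(r\br r)$. Bounding $|z-\br z|\le\rho$ in \eqref{eq_urform} gives the master inequality
\begin{equation}\label{eq_pf_master}
\abs{u^r(r,z)}\lesssim_d \frac1{r^{d/2}}\iint_{\Pi}\br r^{\,d/2-2}\,\rho\,\abs{F_d'(s)}\,\abs{\omg(\br r,\br z)}\,d\br z\,d\br r .
\end{equation}
The only extra fact about $F_d'$ I need beyond Lemma \ref{lem_Fd'est} is the interpolated form $\abs{F_d'(s)}\lesssim_d s^{-\alpha}$ for every $\alpha\in[1,\tfrac d2+1]$ and every $s>0$, which is immediate from $\min\{s^{-1},s^{-(d/2+1)}\}\le s^{-\alpha}$ on that range. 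The plan is to split the $(\br r,\br z)$-integral in \eqref{eq_pf_master} into the near region $\calN:=\{\rho\le r/2\}$ and its complement, and estimate each piece; neither bound will need largeness or smallness of $r$.

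\textbf{Far region.} On $\Pi\setminus\calN$ we have $\rho>r/2$, so I use the endpoint exponent $\alpha=\tfrac d2+1$: then $\rho\,\abs{F_d'(s)}\lesssim_d(r\br r)^{d/2+1}\rho^{-(d+1)}\lesssim_d(r\br r)^{d/2+1}r^{-(d+1)}$. Inserting this into \eqref{eq_pf_master} and collecting the powers of $r$ and $\br r$ leaves
$$\abs{u^r(r,z)}\Big|_{\Pi\setminus\calN}\lesssim_d \frac1{r^d}\iint_{\Pi}\br r^{\,d-1}\abs{\omg}\,d\br z\,d\br r=\frac1{r^d}\,\nrm{r^{2d-3}\xi}_{L^1(\Pi)},$$
since $\br r^{\,d-1}\abs{\omg}=\br r^{\,2d-3}\abs{\xi}$.

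\textbf{Near region.} On $\calN$ one has $\abs{\br r-r}\le\rho\le r/2$, hence $\br r\simeq r$, and $s\le\tfrac12$, so the active branch is $\abs{F_d'(s)}\lesssim s^{-1}=r\br r/\rho^2$. Substituting into \eqref{eq_pf_master}, the factor $\br r^{\,d/2-1}\simeq_d r^{\,d/2-1}$ cancels every explicit power of $r$ and the estimate collapses to a two-dimensional Riesz-potential bound on a disc,
$$\abs{u^r(r,z)}\Big|_{\calN}\lesssim_d \iint_{\calN}\frac{\abs{\omg(\br r,\br z)}}{\rho}\,d\br z\,d\br r\lesssim_d r^{\,d-2}\iint_{\calN}\frac{1_{\calA}(\br r,\br z)}{\rho}\,d\br z\,d\br r .$$
By rearrangement the last integral is largest when $\calA\cap\calN$ is a disc centred at $(r,z)$, so it is $\lesssim\abs{\calA\cap\calN}^{1/2}$ (planar Lebesgue measure). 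Finally, since $\br r\ge r/2$ on $\calN$,
$$\abs{\calA\cap\calN}\le\Bigl(\tfrac r2\Bigr)^{-(d-2)}\iint_{\{\br r\ge r/2\}}\abs{\omg}\le\Bigl(\tfrac r2\Bigr)^{-(2d-3)}\iint_{\{\br r\ge r/2\}}\br r^{\,d-1}\abs{\omg}\lesssim_d r^{-(2d-3)}\nrm{r^{2d-3}\xi}_{L^1(\Pi)},$$
and, more crudely, $\abs{\calA\cap\calN}\le(r/2)^{-(d-2)}\nrm{r^{d-2}\xi}_{L^1(\Pi)}$. Combining these, $\abs{u^r(r,z)}|_{\calN}\lesssim_d r^{-1/2}\bigl(\nrm{r^{2d-3}\xi}_{L^1(\Pi)}^{1/2}+\nrm{r^{d-2}\xi}_{L^1(\Pi)}^{1/2}\bigr)$; adding the near and far contributions yields \eqref{eq_simpleur}.

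\textbf{Main obstacle.} The far-field piece is routine once one notices the interpolation $\abs{F_d'(s)}\lesssim_d s^{-\alpha}$. The delicate step is the near field: the kernel $\rho\,F_d'(s)$ is of size $\rho^{-1}$ near the diagonal, and this singularity must be absorbed without paying powers of $r$. That is possible precisely because $\br r\simeq r$ on $\calN$ forces all explicit powers of $r$ to cancel, after which the bound hinges on the a priori smallness of the planar area $\abs{\calA\cap\{\rho\le r/2\}}$ — and that smallness is exactly what the weighted $L^1$ norms of $\xi$ furnish. A secondary point to get right is checking that the endpoint exponent $\alpha=\tfrac d2+1$ is admissible uniformly in the far region (it is, by the interpolated bound), so one need not separately treat the sub-regions where $s$ is small.
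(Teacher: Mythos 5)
Your proof is correct and follows essentially the same machinery as the paper's: the same kernel bounds from Lemma~\ref{lem_Fd'est}, the same rearrangement argument in the near field, and the same use of weighted $L^1$ norms of $\xi$ to control $\abs{\calA\cap\calN}$. The one genuine difference is the decomposition: you split the $(\br r,\br z)$-integral into two pieces by planar distance $\rho=[(r-\br r)^2+(z-\br z)^2]^{1/2}$ (near: $\rho\le r/2$, far: $\rho>r/2$), whereas the paper splits into three ($A_1$: $\abs{r-\br r}>r/2$; $A_2$: $\abs{r-\br r}\le r/2$, $\abs{z-\br z}>r/2$; $A_3$: $\abs{r-\br r}\le r/2$, $\abs{z-\br z}\le r/2$). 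You collapse $A_1\cup A_2$ into a single far region by using the $s^{-(d/2+1)}$ branch uniformly there, getting $r^{-d}$ throughout, whereas the paper handles $A_2$ separately with the $s^{-1}$ branch and gets $r^{-1}$ (both land inside the claimed $r^{-d}+r^{-1/2}$ envelope, since $r^{-1}\le r^{-d}+r^{-1/2}$ for all $r>0$). Your version is marginally cleaner; the paper's $A_2$ estimate yields the extra information that the intermediate region only costs $r^{-1}$, which is not needed here.

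One small but real slip in your final sentence on the near field: using the cruder bound $\abs{\calA\cap\calN}\le(r/2)^{-(d-2)}\nrm{r^{d-2}\xi}_{L^1(\Pi)}$ gives
\begin{equation}
\abs{u^r}\big|_{\calN}\lesssim_d r^{\,d-2}\abs{\calA\cap\calN}^{1/2}\lesssim_d r^{\,(d-2)/2}\nrm{r^{d-2}\xi}_{L^1(\Pi)}^{1/2},
\end{equation}
which \emph{grows} in $r$, so you cannot ``combine these'' to arrive at $r^{-1/2}\bigl(\nrm{r^{2d-3}\xi}^{1/2}+\nrm{r^{d-2}\xi}^{1/2}\bigr)$. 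Only the $\nrm{r^{2d-3}\xi}_{L^1(\Pi)}$ bound produces the $r^{-1/2}$ decay; the other bound should simply be dropped. This does not affect the validity of the proposition (the constant is allowed to depend on either norm), but as written the ``combining'' step is incorrect.
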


\begin{rmk}
	For a solution $ \xi(t) $ of \eqref{eq_transport} with initial data $ \xi_{0} $, assuming its existence, the quantities
	$$ \nrm{r^{d-2}\xi}_{L^{1}(\Pi)}\simeq_{d}\nrm{\xi}_{L^{1}(\bbR^{d})},\quad\nrm{r^{2d-3}\xi}_{L^{1}(\Pi)}\simeq_{d}\nrm{r\omg}_{L^{1}(\bbR^{d})} $$
	are conserved in time.
\end{rmk}

\begin{proof}[Proof of Proposition \ref{prop_simpleur}]
	We fix $ (r,z)\in\Pi $. 
	Let us recall the form \eqref{eq_urform} of $ u^{r} $, which in our case becomes
	\begin{equation*}
		\begin{split}
			u^{r}(r,z)&=\iint_{\Pi}\frac{\br{r}^{d/2-2}(z-\br{z})}{r^{d/2}}F_{d}'\bigg(\frac{(r-\br{r})^{2}+(z-\br{z})^{2}}{r\br{r}}\bigg)\omg(\br{r},\br{z})d\br{z}d\br{r}\\
			&=\iint_{\Pi}\frac{\br{r}^{d/2-2}(z-\br{z})}{r^{d/2}}F_{d}'\bigg(\frac{(r-\br{r})^{2}+(z-\br{z})^{2}}{r\br{r}}\bigg)\br{r}^{d-2}\xi(\br{r},\br{z})d\br{z}d\br{r}\\
			&=\iint_{\calA}\frac{\br{r}^{(3d)/2-4}(z-\br{z})}{r^{d/2}}F_{d}'\bigg(\frac{(r-\br{r})^{2}+(z-\br{z})^{2}}{r\br{r}}\bigg)d\br{z}d\br{r}.
		\end{split}
	\end{equation*}
	We split the integral domain $ \calA $ into
	\begin{equation*}
		\begin{split}
			A_{1}&:=\calA\cap\bigg\lbrace(\br{r},\br{z})\in\Pi : |r-\br{r}|>\frac{r}{2}\bigg\rbrace,\\
			A_{2}&:=\calA\cap\bigg\lbrace(\br{r},\br{z})\in\Pi : |r-\br{r}|\leq\frac{r}{2},\;|z-\br{z}|>\frac{r}{2}\bigg\rbrace,\\
			A_{3}&:=\calA\cap\bigg\lbrace(\br{r},\br{z})\in\Pi : |r-\br{r}|\leq\frac{r}{2},\;|z-\br{z}|\leq \frac{r}{2}\bigg\rbrace.
		\end{split}
	\end{equation*}
	Then in $ A_{1} $, we have
	\begin{equation*}
		\begin{split}
			\bigg|\iint_{A_{1}}\frac{\br{r}^{(3d)/2-4}(z-\br{z})}{r^{d/2}}F_{d}'\bigg(\frac{(r-\br{r})^{2}+(z-\br{z})^{2}}{r\br{r}}\bigg)d\br{z}d\br{r}\bigg|&\leq\iint_{A_{1}}\frac{\br{r}^{(3d)/2-4}|z-\br{z}|}{r^{d/2}}\cdot\frac{(r\br{r})^{d/2+1}}{[(r-\br{r})^{2}+(z-\br{z})^{2}]^{d/2+1}}d\br{z}d\br{r}\\
			&\leq\iint_{A_{1}}\frac{r\br{r}^{2d-3}}{[(r-\br{r})^{2}+(z-\br{z})^{2}]^{(d+1)/2}}d\br{z}d\br{r}\\
			&\lesssim_{d}
			\frac{1
				}{r^{d
				}}\cdot
			\nrm{r^{2d-3}\xi}_{L^{1}(\Pi)}.
		\end{split}
	\end{equation*}
	In $ A_{2} $, we get
	\begin{equation*}
		\begin{split}
			\bigg|\iint_{A_{2}}\frac{\br{r}^{(3d)/2-4}(z-\br{z})}{r^{d/2}}F_{d}'\bigg(\frac{(r-\br{r})^{2}+(z-\br{z})^{2}}{r\br{r}}\bigg)d\br{z}d\br{r}\bigg|&\leq\iint_{A_{2}}\frac{\br{r}^{(3d)/2-4}|z-\br{z}|}{r^{d/2}}\cdot\frac{r\br{r}}{(r-\br{r})^{2}+(z-\br{z})^{2}}d\br{z}d\br{r}\\
			&\leq\iint_{A_{2}}\frac{\br{r}^{(3d)/2-3}}{r^{d/2-1}}\cdot\frac{1}{[(r-\br{r})^{2}+(z-\br{z})^{2}]^{1/2}}d\br{z}d\br{r}\\
			&\lesssim
			\iint_{A_{2}}\underbrace{\bigg(\frac{\br{r}}{r}\bigg)^{d/2-1}}_{\lesssim_{d}1
			}\cdot\frac{\br{r}^{d-2}}{r}d\br{z}d\br{r}\lesssim_{d}
			\frac{1
			}{r}\cdot\nrm{r^{d-2}\xi}_{L^{1}(\Pi)}.
		\end{split}
	\end{equation*}
	In $ A_{3} $, we have
	\begin{equation*}
		\begin{split}
			\bigg|\iint_{A_{3}}\frac{\br{r}^{(3d)/2-4}(z-\br{z})}{r^{d/2}}F_{d}'\bigg(\frac{(r-\br{r})^{2}+(z-\br{z})^{2}}{r\br{r}}\bigg)d\br{z}d\br{r}\bigg|&\leq\iint_{A_{3}}\frac{\br{r}^{(3d)/2-3}}{r^{d/2-1}}\cdot\frac{1}{[(r-\br{r})^{2}+(z-\br{z})^{2}]^{1/2}}d\br{z}d\br{r}\\
			&\lesssim_{d}
			\iint_{A_{3}}\frac{\br{r}^{d-2}}{[(r-\br{r})^{2}+(z-\br{z})^{2}]^{1/2}}d\br{z}d\br{r}\\
			&\lesssim_{d}
			r^{d-2}\iint_{A_{3}}\frac{1}{[(r-\br{r})^{2}+(z-\br{z})^{2}]^{1/2}}d\br{z}d\br{r}.
		\end{split}
	\end{equation*}
	Here, note that we can estimate the integral in the last term as
	$$ \iint_{A_{3}}\frac{1}{[(r-\br{r})^{2}+(z-\br{z})^{2}]^{1/2}}d\br{z}d\br{r}\leq\int_{B_{\alp}}\frac{1}{|y|}dy\lesssim
	\alp, $$
	where $ B_{\alp}:=\lbrace y\in\bbR^{2} : |y|<\alp\rbrace $ is the disk whose measure in $ \bbR^{2} $ is the same as the measure of $ A_{3} $ in $ \Pi $. 
	In addition, we get
	\begin{equation*}
		\begin{split}
			|A_{3}|&=\bigg|\calA\cap\bigg\lbrace(\br{r},\br{z})\in\Pi : |r-\br{r}|\leq\frac{r}{2},\;|z-\br{z}|\leq \frac{r}{2}\bigg\rbrace\bigg|=\iint_{\substack{|r-\br{r}|\leq r/2, \\ |z-\br{z}|\leq r/2}}\xi(\br{r},\br{z})d\br{z}d\br{r}\\
			&=\iint_{\substack{|r-\br{r}|\leq r/2, \\ |z-\br{z}|\leq r/2}}\frac{\br{r}^{2d-3}}{\br{r}^{2d-3}}\cdot\xi(\br{r},\br{z})d\br{z}d\br{r}\lesssim_{d}
			\frac{1
			}{r^{2d-3}}\cdot\nrm{r^{2d-3}\xi}_{L^{1}(\Pi)}.
		\end{split}
	\end{equation*}
	From this, we obtain
	\begin{equation*}
		\begin{split}
			\iint_{A_{3}}\frac{1}{[(r-\br{r})^{2}+(z-\br{z})^{2}]^{1/2}}d\br{z}d\br{r}&\lesssim
			\alp
			\lesssim
			|A_{3}|^{1/2}\lesssim_{d,\xi}
			\frac{1
			}{r^{d-3/2}}.
		\end{split}
	\end{equation*}
	This gives us
	$$ \bigg|\iint_{A_{3}}\frac{\br{r}^{(3d)/2-4}(z-\br{z})}{r^{d/2}}F_{d}'\bigg(\frac{(r-\br{r})^{2}+(z-\br{z})^{2}}{r\br{r}}\bigg)d\br{z}d\br{r}\bigg|\lesssim_{d,\xi}
	r^{d-2}\cdot\frac{1
	}{r^{d-3/2}}=
	\frac{1
	}{\sqrt{r}}. $$
	Summing these up, we obtain \eqref{eq_simpleur}.
\end{proof}

\begin{rmk}\label{rmk_simpleu}
	Considering that $ \xi_{0} $ is a patch, as long as the solution of \eqref{eq_transport} exists, we can get the following time-independent pointwise decaying estimate of $ u^{r} $ from Proposition \ref{prop_simpleur}:
	\begin{equation}\label{eq_simpleur1}
		|u^{r}(t,r,z)|\lesssim_{d,\xi}
		\frac{1
		}{\sqrt{r}}\quad\text{for any}\quad r\geq1.
	\end{equation}
	From this, we have the ordinary differential inequality (see the definition of $ S(t) $ in \eqref{eq_defofSt})
	$$ S'(t)\lesssim_{d,\xi}
	\frac{1
	}{\sqrt{S(t)}}\quad\text{for any}\quad r=S(t). $$
	Solving this gives us, at least formally,
	$$ S(t)\lesssim (1+t)^{2/3}. $$

\end{rmk}

\section{Support growth of a single-signed vorticity for any $ d\geq3 $}\label{sec_singlesign}

\subsection{Refinement of the pointwise estimate of $ u^{r} $}\label{subsec_refinement}

Now we prove our main result. We assume the conditions in Theorem \ref{thm:3}, including the condition $ u_{0}\in H^{s}(\bbR^{d}) $ for some $ s>1+d/2 $. We follow the spirit of \cite{M94,ISG99}, using the dynamics of $ \omg $. 
Precisely, we take the approach of \cite{Maffei2001}, which is implicated in our Lemma \ref{lem_mafmar2}. 
Additionally, without loss of generality, we assume that $ \omg_{0} $ is non-negative. Then for each $ t\geq0 $ and $ r\geq0 $, we use the notation
$$ m_{r}(t):=\iint_{A_{r}}\omg(t,\br{r},\br{z})d\br{z}d\br{r},\quad A_{r}:=\lbrace (\br{r},\br{z})\in\Pi : \br{r}\geq r \rbrace, $$
where $ \omg $ is the solution of \eqref{eq_vortformNd} with the initial data $ \omg_{0} $.
\medskip

\noindent We introduce Lemma \ref{lem_mafmar} and Proposition \ref{prop_mafmar}. The proofs of Proposition \ref{prop_mafmar} and Theorem \ref{thm:3} are postponed after showing some technical lemmas that are needed to prove the proposition. 
From below, the constant $ C>0 $ may change line-by-line, and depends only on $ d $, $ \nrm{r\omg_{0}}_{L^{1}(\bbR^{d})} $, $ \nrm{r^{-(d-2)}\omg_{0}}_{L^{\ift}(\bbR^{d})} $ and $ S_{0} $, unless mentioned otherwise.

\medskip

\noindent The following lemma shows a \textit{pointwise} estimate of $ u^{r} $, which holds for any point in $ \Pi $.
\begin{lem}\label{lem_mafmar}
	There exists $ C>0 $ 
	such that for any $ t\geq0 $ and $ (r,z)\in\Pi $, we have
	\begin{equation}\label{eq_urclaim1}
		|u^{r}(t,r,z)|\leq C\bigg[\frac{1}{r^{d}}+(r^{d-2}+1)m_{r/2}(t)^{1/2}\bigg].
	\end{equation}
\end{lem}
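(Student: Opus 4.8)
The plan is to start from the explicit formula \eqref{eq_urform} for $u^{r}$, substitute $\omg=r^{d-2}\xi$ where convenient, and split the $(\br r,\br z)$-integral according to whether $(\br r,\br z)$ is far from or near the base point $(r,z)$ in the $(r,z)$-plane. The natural splitting mirrors the one in the proof of Proposition \ref{prop_simpleur}: let
\[
B_{1}:=\Big\{|r-\br r|>\tfrac r2\Big\},\quad
B_{2}:=\Big\{|r-\br r|\le\tfrac r2,\ |z-\br z|>\tfrac r2\Big\},\quad
B_{3}:=\Big\{|r-\br r|\le\tfrac r2,\ |z-\br z|\le\tfrac r2\Big\}.
\]
In $B_{1}$ I would use the decay bound $|F_{d}'(s)|\lesssim_{d}s^{-(d/2+1)}$ from Lemma \ref{lem_Fd'est}, which after clearing the $r\br r$ factors leaves an integrand of the form $r\,\br r^{\,2d-3}\,[(r-\br r)^{2}+(z-\br z)^{2}]^{-(d+1)/2}$; since on $B_{1}$ the denominator is $\gtrsim r^{d+1}$, this contributes $\lesssim r^{-d}\nrm{r^{2d-3}\xi}_{L^{1}(\Pi)}$, and $\nrm{r^{2d-3}\xi}_{L^{1}(\Pi)}\simeq_{d}\nrm{r\omg}_{L^{1}(\bbR^{d})}$ is a conserved quantity absorbed into $C$. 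This is exactly the first term $C/r^{d}$ in \eqref{eq_urclaim1}.

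For the near regions $B_{2}$ and $B_{3}$ I would instead use the bound $|F_{d}'(s)|\lesssim_{d}1/s$, giving an integrand $\lesssim (\br r/r)^{d/2-1}\,\br r^{\,d-2}\,[(r-\br r)^{2}+(z-\br z)^{2}]^{-1/2}$; on both regions $\br r\aeq r$ so $(\br r/r)^{d/2-1}\aleq_{d}1$, and the surviving factor $\br r^{\,d-2}\aleq_{d}r^{d-2}$ is pulled out. The remaining job is to bound $\iint [(r-\br r)^{2}+(z-\br z)^{2}]^{-1/2}\,\omg(t,\br r,\br z)\,d\br z\,d\br r$ over the square $\{|r-\br r|\le r/2,\ |z-\br z|\le r/2\}$ --- and here, crucially, we are now dealing with $\omg$ itself (an $L^{1}$ mass density), not a patch indicator. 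The standard trick (this is where the dynamics/sign condition enters only through the \emph{definition} of $m_{r/2}(t)$) is: the mild singularity $|y|^{-1}$ on $\bbR^{2}$ is integrable, and by the layer-cake / rearrangement inequality, for any finite measure $f\,dy\ge0$ one has $\iint |y-y_{0}|^{-1}f(y)\,dy\lesssim (\int f)^{1/2}(\nrm{f}_{L^{\ift}})^{1/2}$ --- but here $\omg$ need not be bounded. Instead I would use the crude bound $[(r-\br r)^{2}+(z-\br z)^{2}]^{-1/2}\le 2/r$ is too lossy; the right move is Hölder against the $L^{\ift}$ bound on $\xi$: write $\omg=\br r^{d-2}\xi$ and split once more into a tiny disk of radius $\rho$ around $(r,z)$ (where $\iint|y|^{-1}\,dy\lesssim\rho$ and $\omg\lesssim r^{d-2}\nrm{\xi}_{L^{\ift}}$) plus the complement inside the square (where $|y|^{-1}\lesssim\rho^{-1}$ and the $\omg$-mass is $\le\iint_{|y|\le r/2}\omg\le m_{r/2}(t)$ since $|r-\br r|\le r/2\Rightarrow \br r\ge r/2$). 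Optimizing over $\rho$ balances $\rho\, r^{d-2}$ against $\rho^{-1}m_{r/2}(t)$, i.e. $\rho\aeq (m_{r/2}(t)/r^{d-2})^{1/2}$, producing the bound $\lesssim r^{(d-2)/2}m_{r/2}(t)^{1/2}$; multiplying back by the extracted $r^{d-2}$ from $B_{2}\cup B_{3}$ overshoots, so I must be careful to extract only $r^{(d-2)/2}$ here, giving altogether $\lesssim r^{d-2}m_{r/2}(t)^{1/2}$, matching the leading piece of the second bracket in \eqref{eq_urclaim1}.

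Finally, to get the $+1$ (i.e. the additive $m_{r/2}(t)^{1/2}$ with no $r$-power) I would observe that for $r\le 1$ the region decomposition still applies but one should instead keep $\br r^{d-2}$ bounded by an absolute constant times $1$ rather than by $r^{d-2}$, or more simply note that $m_{r/2}(t)\le\nrm{\omg(t)}_{L^{1}}$ is bounded and $r^{d-2}m_{r/2}(t)^{1/2}+m_{r/2}(t)^{1/2}$ dominates uniformly; so splitting the estimate into the cases $r\ge 1$ and $r\le 1$ and recording the worse of the two powers of $r$ yields the stated form $(r^{d-2}+1)m_{r/2}(t)^{1/2}$. The main obstacle I anticipate is the near-diagonal estimate in $B_{3}$: unlike the patch case, $\omg$ is merely integrable, so one cannot simply bound $|A_{3}|$ by the mass; the fix is the two-scale split in $\rho$ above, using the conserved $L^{\ift}$ bound on $\xi=r^{-(d-2)}\omg$ for the inner disk and the local mass $m_{r/2}(t)$ for the outer annulus, and then optimizing --- getting the powers of $r$ to land exactly on $r^{d-2}$ (and not $r^{3d/2-4}$ or similar) requires tracking the $(\br r/r)^{d/2-1}\aleq 1$ cancellation carefully.
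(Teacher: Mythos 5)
Your proposal takes essentially the same route as the paper's proof: split according to $|r-\br r|\gtrless r/2$, apply the two regimes of $|F_d'|$ from Lemma \ref{lem_Fd'est}, use the conserved impulse $\nrm{r\omg_0}_{L^1(\bbR^d)}$ on the far region to obtain the $C/r^d$ term, and on the near region absorb $(\br r/r)^{d/2-1}\lesssim 1$ and then bound the remaining weakly singular integral $\iint_E\omg/|y|$ (with $|y|^2=(r-\br r)^2+(z-\br z)^2$) by a two-scale split: on the inner disk of radius $\rho$ use $\omg\le\br r^{d-2}\nrm{\xi_0}_{L^\ift}\lesssim r^{d-2}\nrm{\xi_0}_{L^\ift}$ together with $\iint_{B_\rho}|y|^{-1}\,dy\lesssim\rho$, and on the outer region use $|y|^{-1}\le\rho^{-1}$ and $\iint_E\omg\le m_{r/2}(t)$. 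The paper does exactly this, merely taking $\rho=k:=m_{r/2}(t)^{1/2}$ outright rather than optimizing, which produces the two summands $r^{d-2}m_{r/2}(t)^{1/2}$ (inner) and $m_{r/2}(t)^{1/2}$ (outer) and hence the prefactor $r^{d-2}+1$. Two small remarks on your write-up. First, the ``overshoot'' you describe, and the ad hoc fix of ``extracting only $r^{(d-2)/2}$'', arise from double-counting $\br r^{d-2}$: you say you pull it out, yet you keep $\omg=\br r^{d-2}\xi$ inside the residual integral. If you either never extract it (bound $\iint_E\omg/|y|$ directly, as the paper does) or consistently replace $\omg$ by $\xi$ after extraction, your $\rho$-optimization lands cleanly on $r^{(d-2)/2}m_{r/2}(t)^{1/2}$ with no overshoot --- which is in fact \emph{sharper} than the stated bound, since $r^{(d-2)/2}\le r^{d-2}+1$ for all $r>0$. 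Second, the additive $+1$ in the lemma is not a small-$r$ case distinction as you suggest: it is simply the $m_{r/2}(t)^{1/2}$ contribution from the outer region at the unoptimized choice $k=m_{r/2}(t)^{1/2}$, which carries no power of $r$.
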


\begin{proof}
	We recall the integral form \eqref{eq_urform} of $ u^{r} $. We split the integral range into $ E:=\lbrace(\br{r},\br{z})\in\Pi : |r-\br{r}|\leq r/2\rbrace $ and $ \Pi\setminus E $, and use the estimate \eqref{eq_Fdprime} in Lemma \ref{lem_Fd'est}. On $ \Pi\setminus E $, we have
	\begin{equation*}
		\begin{split}
			\bigg|\iint_{\Pi\setminus E}\frac{\br{r}^{d/2-2}(z-\br{z})}{r^{d/2}}F_{d}'\bigg(\frac{(r-\br{r})^{2}+(z-\br{z})^{2}}{r\br{r}}\bigg)&\omg(t,\br{r},\br{z})d\br{z}d\br{r}\bigg|\leq\iint_{\Pi\setminus E}\frac{\br{r}^{d/2-2}|z-\br{z}|}{r^{d/2}}\cdot\frac{(r\br{r})^{d/2+1}}{[(r-\br{r})^{2}+(z-\br{z})^{2}]^{d/2+1}}\omg(t,\br{r},\br{z})d\br{z}d\br{r}\\
			&\leq\iint_{\Pi\setminus E}\frac{r\br{r}^{d-1}}{[(r-\br{r})^{2}+(z-\br{z})^{2}]^{(d+1)/2}}\omg(t,\br{r},\br{z})d\br{z}d\br{r}
			\leq\frac{Cr}{r^{d+1}}
			\cdot\nrm{r^{d-1}\omg(t)}_{L^{1}(\Pi)}\\
			&\simeq_{d}\frac{C}{r^{d}}\cdot\nrm{r\omg(t)}_{L^{1}(\bbR^{d})}=\frac{C}{r^{d}}\cdot\nrm{r\omg_{0}}_{L^{1}(\bbR^{d})}.
		\end{split}
	\end{equation*}
	Here, we used the conservation $ \nrm{r\omg(t)}_{L^{1}(\bbR^{d})}=\nrm{r\omg_{0}}_{L^{1}(\bbR^{d})} $ 
	for a single-signed $ \omg $. On the other hand on $ E $, we have
	\begin{equation*}
		\begin{split}
			\bigg|\iint_{E}\frac{\br{r}^{d/2-2}(z-\br{z})}{r^{d/2}}F_{d}'\bigg(\frac{(r-\br{r})^{2}+(z-\br{z})^{2}}{r\br{r}}\bigg)\omg(t,\br{r},\br{z})d\br{z}d\br{r}\bigg|&\leq\iint_{E}\frac{\br{r}^{d/2-2}|z-\br{z}|}{r^{d/2}}\cdot\frac{r\br{r}}{(r-\br{r})^{2}+(z-\br{z})^{2}}\omg(t,\br{r},\br{z})d\br{z}d\br{r}\\
			&\leq\iint_{E}\bigg(\frac{\br{r}}{r}\bigg)^{d/2-1}\cdot\frac{\omg(t,\br{r},\br{z})}{[(r-\br{r})^{2}+(z-\br{z})^{2}]^{1/2}}d\br{z}d\br{r}\\
			&\leq C
			\iint_{E}\frac{\omg(t,\br{r},\br{z})}{[(r-\br{r})^{2}+(z-\br{z})^{2}]^{1/2}}d\br{z}d\br{r}.
		\end{split}
	\end{equation*}
	Now we split the integral range of the last term above into $ E\cap B_{k} $ and $ E\setminus B_{k} $, where
	$$ B_{k}=\lbrace(\br{r},\br{z})\in\Pi : [(r-\br{r})^{2}+(z-\br{z})^{2}]^{1/2}<k\rbrace,\quad k:=m_{r/2}(t)^{1/2}. $$
	Then we get
	\begin{equation*}
		\begin{split}
			\iint_{E\cap B_{k}}\frac{\omg(t,\br{r},\br{z})}{[(r-\br{r})^{2}+(z-\br{z})^{2}]^{1/2}}d\br{z}d\br{r}&=\iint_{E\cap B_{k}}\frac{\omg(t,\br{r},\br{z})}{\br{r}^{d-2}}\cdot\frac{\br{r}^{d-2}}{[(r-\br{r})^{2}+(z-\br{z})^{2}]^{1/2}}d\br{z}d\br{r}\\
			&\leq\bigg\|\frac{\omg(t)}{r^{d-2}}\bigg\|_{L^{\ift}(\Pi)}Cr^{d-2}
			\iint_{B_{k}}\frac{1}{[(r-\br{r})^{2}+(z-\br{z})^{2}]^{1/2}}d\br{z}d\br{r}\\
			&\leq\bigg\|\frac{\omg_{0}}{r^{d-2}}\bigg\|_{L^{\ift}(\bbR^{d})}Cr^{d-2}
			k=Cr^{d-2}m_{r/2}(t)^{1/2},
		\end{split}
	\end{equation*}
	and
	\begin{equation*}
		\begin{split}
			\iint_{E\setminus B_{k}}\frac{\omg(t,\br{r},\br{z})}{[(r-\br{r})^{2}+(z-\br{z})^{2}]^{1/2}}d\br{z}d\br{r}&\leq\frac{1}{k}\iint_{A_{r/2}}\omg(t,\br{r},\br{z})d\br{z}d\br{r}\\
			&=\frac{m_{r/2}(t)}{k}=m_{r/2}(t)^{1/2}.
		\end{split}
	\end{equation*}
	This proves \eqref{eq_urclaim1}.
\end{proof}
\noindent The following proposition says that we can shrink $ m_{r}(t) $ as much as we want it to be when $ r $ is depending on time.

\begin{prop}\label{prop_mafmar}
	Let $ q\geq d $ be an integer. 
	Then there exists $ C>0 $ 
	such that for any $ t\geq0 $ and $ r\geq0 $ that satisfies
	$$ r\geq C[(1+t)\ln(e+t)]^{1/(d+1)}, $$
	we have
	\begin{equation}\label{eq_rn+2}
		m_{r}(t)\leq\frac{1}{r^{q
		}}.
	\end{equation}
\end{prop}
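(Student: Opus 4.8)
The strategy follows the confinement scheme of Marchioro \cite{M94} and Iftimie--Sideris--Gamblin \cite{ISG99}, adapted to the axisymmetric setting à la Maffei--Marchioro \cite{Maffei2001}, and now in any dimension $d\geq3$. The central object is the quantity $m_r(t)$, for which we want a differential-inequality control. The plan is to (i) compute $\frac{d}{dt}\iint_{\Pi}\phi_r(\br{r})\,\omg(t,\br{r},\br{z})\,d\br{z}\,d\br{r}$ for a suitable smooth, non-negative, increasing cutoff $\phi_r$ that approximates $1_{\{\br{r}\geq r\}}$ from above and is supported in $\{\br{r}\geq r/2\}$, using the vorticity equation \eqref{eq_vortformNd}; (ii) observe that the stretching term $\frac{(d-2)u^r}{r}\omg$, together with the transport part, leads after the usual manipulation to an integral of the form $\iint \phi_r'(\br{r})\,u^r(t,\br{r},\br{z})\,\omg(t,\br{r},\br{z})\,d\br{z}\,d\br{r}$ (this is the step encoded in Lemma \ref{lem_mafmar2}, which I would invoke); (iii) on the support of $\phi_r'$, i.e. for $\br{r}\gtrsim r$, plug in the pointwise bound \eqref{eq_urclaim1} of Lemma \ref{lem_mafmar}, which gives $|u^r|\leq C[\br{r}^{-d}+(\br{r}^{d-2}+1)m_{\br{r}/2}(t)^{1/2}]$, and since $\br{r}\gtrsim r$ and $m$ is non-increasing in its radial argument, bound everything in terms of $m_{r/4}(t)$ (or a comparable $m_{cr}(t)$). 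This produces a closed differential inequality of the schematic shape
\begin{equation*}
	\frac{d}{dt}m_r(t)\ \lesssim\ \frac{C}{r^{d}}\,m_{r/4}(t)^{1/2}\ +\ C\,r^{d-2}\,m_{r/4}(t)^{3/2},
\end{equation*}
where the gain of a power $3/2$ (super-linear in $m$) is exactly the mechanism that lets us bootstrap $m_r(t)$ to be smaller than any prescribed power $r^{-q}$, once $r$ is large enough in terms of $t$.

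**Executing the bootstrap.** With the differential inequality in hand, I would run an induction on the exponent. Start from the crude a priori bound coming from the conserved radial impulse: since $\omg\geq0$, Chebyshev gives $m_r(t)\leq r^{-1}\iint r\omg\,d\br{z}\,d\br{r}\simeq_d r^{-1}\|r\omg_0\|_{L^1(\bbR^d)}$, valid for all $t$; more generally $m_r(t) \le C r^{-\alpha}$ on the relevant time interval follows once $\alpha$ is small. Feeding $m_{r/4}(t)\leq C r^{-\alpha}$ back into the integrated differential inequality over $[0,t]$, together with the hypothesis $r\geq C[(1+t)\ln(e+t)]^{1/(d+1)}$ — which makes $t/r^{d+1}$ bounded, hence $t\cdot r^{d-2}\cdot r^{-\beta}$ small for suitable $\beta$ — upgrades the exponent $\alpha$ to a strictly larger $\alpha'$ (the $3/2$ power is what produces the strict gain: $m\mapsto m^{3/2}$ turns $r^{-\alpha}$ into $r^{-3\alpha/2}$, and the prefactors only cost a bounded power of $r$ thanks to the time restriction). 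Iterating finitely many times overshoots any target exponent $q$, giving \eqref{eq_rn+2}. The logarithmic factor in the threshold is precisely what absorbs the logarithmic loss incurred when integrating $1/r$-type terms in time at the borderline rate, exactly as in \cite{ISG99}.

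**Main obstacle.** The delicate point is not the algebra of the iteration but controlling the time integration at the correct rate so that the threshold comes out as $[(1+t)\ln(e+t)]^{1/(d+1)}$ and not something larger. Concretely, after integrating $\frac{d}{dt}m_r\lesssim C r^{d-2} m_{r/4}^{3/2}$ in $t$, one must keep $r$ fixed while $m_{r/4}(s)$ is itself only known to satisfy the previous-stage bound on the whole interval $[0,t]$; the subtlety is that the radial cutoff location used in $m_{r/4}$ must stay fixed throughout the time integration (so that the induction hypothesis applies uniformly in $s\in[0,t]$), which forces a careful choice of the nested radii $r > r/2 > r/4 > \cdots$ and a verification that at each stage the constant $C$, which degrades with the stage, is still absorbed because each stage only needs $r$ to exceed a fixed multiple of $[(1+t)\ln(e+t)]^{1/(d+1)}$. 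Managing the interplay between the finitely many radii, the growing constants, and the single uniform threshold — and checking that the $\br r^{-d}$ term (which integrates to something like $t/r^d$, harmless under the threshold since $d<d+1$) never dominates — is where the real work lies. I would isolate the per-stage estimate as a standalone lemma (monotone in the radius, with explicit dependence of $C$ on the stage) and then close the induction cleanly.
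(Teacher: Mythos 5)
Your general framework (smooth radial cutoff, differentiate in $t$, integrate) is in the right spirit, but the concrete differential inequality you write down does not close, and the bootstrap you propose never ignites. First, the inequality itself is off: after plugging the pointwise bound \eqref{eq_urclaim1} into $g'(t)=\iint\phi_r'\,u^r\,\omg$ with $|\phi_r'|\lesssim 1/r$ on $[r/2,r]$, the $1/\br r^d$-term of $u^r$ contributes $\frac{C}{r^{d+1}}\,m_{r/2}(t)$ (\emph{linear} in $m$, not $m^{1/2}$), and the other term contributes $C\,r^{d-3}\,m_{r/4}(t)^{1/2}\,m_{r/2}(t)\lesssim C\,r^{d-3}\,m_{r/4}(t)^{3/2}$. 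Second, and more importantly, the superlinear term does not help: with $t\lesssim r^{d+1}$ (the hypothesis) one has $t\,r^{d-3}\,m^{3/2}\lesssim r^{2d-2}\,m^{3/2}$, and feeding in $m\lesssim r^{-\alpha}$ one gets $r^{2d-2-3\alpha/2}$, which is $\le r^{-\alpha}$ only if $\alpha\ge 4(d-1)$. But the only a priori bound available from the conserved radial impulse is $m_r(t)\lesssim r^{-(d-1)}$, and $d-1<4(d-1)$; so the $m^{3/2}$ gain is dominated by its $r^{d-3}$ prefactor and the iteration actually \emph{loses} ground. The linear part alone also fails if you halve the radius at each step: after $\sim\log_2 r$ halvings you hit $S_0$ and the accumulated $2^{(d+1)j}$ factors at each step overwhelm the $t^p/p!$ gain.

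The paper avoids the pointwise $u^r$ bound entirely in this proposition. The heart is Lemma~\ref{lem_mafmar2}: by \emph{symmetrizing} the double integral $\iint\eta'\,u^r\,\omg$ (introducing the antisymmetric kernel $\br r^{d-2}\eta'(r)-r^{d-2}\eta'(\br r)$), one obtains a purely \emph{linear} integral inequality $m_{r_2}(t)\le \frac{Cr_2}{(r_2-r_1)^2\,r_1^d}\int_0^t m_{r_1}(s)\,ds$, where the $(r_2-r_1)^{-2}$ reflects a genuine mean-value cancellation near the diagonal that your approach gives up. That $(r_2-r_1)^{-2}$ is what allows the fine partition $\alp_j = r(1-\tfrac{j}{2p})$ of $[r/2,r]$ into $p$ equal pieces with uniform cost $J_d(\alp_j,\alp_{j-1})\le Cp^2/r^{d+1}$, so that the radius never drops below $r/2$. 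Iterating $p$ times (Lemma~\ref{lem_techlem3}) gives $m_r(t)\le C^p p^p t^p/r^{p(d+1)+d-1}$ via the $t^p/p!$ gain, and then Lemma~\ref{lem_factanalysis} optimizes $p\sim q\ln r$ so that, under the threshold $r\ge C[(1+t)\ln(e+t)]^{1/(d+1)}$, this collapses to $r^{-q}$. You should redo the proof along these lines: the key missing idea is the symmetrization, not a nonlinear bootstrap.
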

\noindent 

\noindent 
The proof of Proposition \ref{prop_mafmar} is given in section \ref{sec_pfmain}.

\subsection{Some technical lemmas
}\label{subsec_techlem}

To prove Proposition \ref{prop_mafmar}, we need several lemmas. 
The first lemma below shows that $ m_{r_{2}}(t) $ is expressed in terms of $ m_{r_{1}}(s) $ for earlier times $ s\leq t $ and 
for $ r_{1}<r_{2} $. This type of lemma for $ d=3 $ is the heart of \cite{Maffei2001}.

\begin{lem}\label{lem_mafmar2}
	There exists $ C>0 $ 
	such that for any $ t\geq0 $ and $ r_{1}, r_{2}>0 $ satisfying $ S_{0}\leq r_{1}<r_{2} $, we have
	\begin{equation}\label{eq_mr2r1}
		\begin{split}
			m_{r_{2}}(t)&\leq J_{d}(r_{1},r_{2})\int_{0}^{t}m_{r_{1}}(s)ds,\quad J_{d}(r_{1},r_{2}):=\frac{Cr_{2}}{(r_{2}-r_{1})^{2}r_{1}^{d}}.
		\end{split}
	\end{equation}
\end{lem}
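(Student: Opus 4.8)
Here is a plan for proving Lemma~\ref{lem_mafmar2}.

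The plan is to adapt the confinement scheme of Marchioro \cite{M94} and Iftimie--Sideris--Gamblin \cite{ISG99}, in the axisymmetric form of Maffei--Marchioro \cite{Maffei2001}, by tracking the flux of vorticity across a radial cutoff. Fix $S_{0}\le r_{1}<r_{2}$ and choose a smooth, nondecreasing function $\phi=\phi_{r_{1},r_{2}}\colon[0,\ift)\to[0,1]$ of the radial variable alone with $\phi\equiv0$ on $[0,r_{1}]$, $\phi\equiv1$ on $[r_{2},\ift)$, $|\phi'|\le C/(r_{2}-r_{1})$ and $|\phi''|\le C/(r_{2}-r_{1})^{2}$. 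Since $1_{A_{r_{2}}}\le\phi$, and since $r_{1}\ge S_{0}$ gives $\supp\omg_{0}\subset\{\br r\le r_{1}\}\subset\{\phi=0\}$, we get $ m_{r_{2}}(t)\le\iint_{\Pi}\phi(\br r)\,\omg(t,\br r,\br z)\,d\br z\,d\br r=\int_{0}^{t}\tfrac{d}{ds}\big(\iint_{\Pi}\phi\,\omg(s)\big)\,ds$, so it suffices to show $\tfrac{d}{ds}\iint_{\Pi}\phi\,\omg(s)\le J_{d}(r_{1},r_{2})\,m_{r_{1}}(s)$.

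For the time-derivative, combining \eqref{eq_vortformNd} with the divergence-free identity $\rd_{r}u^{r}+\rd_{z}u^{z}+\tfrac{d-2}{r}u^{r}=0$ shows that $\omg$ solves the planar continuity equation $\rd_{s}\omg+\rd_{\br r}(u^{r}\omg)+\rd_{\br z}(u^{z}\omg)=0$ on $\Pi$. Multiplying by $\phi(\br r)$ and integrating by parts — the boundary term at $\br r=0$ vanishes since $\phi\equiv0$ near $r_{1}>0$, and those at $\br r=\ift$ and in $\br z$ vanish since $\omg(s)$ is compactly supported — yields $\tfrac{d}{ds}\iint_{\Pi}\phi\,\omg(s)=\iint_{\Pi}\phi'(\br r)\,u^{r}(s,\br r,\br z)\,\omg(s,\br r,\br z)\,d\br z\,d\br r$, whose integrand is supported in the annulus $\{r_{1}\le\br r\le r_{2}\}$. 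I then insert the representation \eqref{eq_urform} of $u^{r}$ and split the $u^{r}$-integration, with $(\rho,\zeta)$ the integration variables, into the \emph{far} part $\{|\br r-\rho|>\br r/2\}$ and the \emph{near} part $\{|\br r-\rho|\le\br r/2\}$. On the far part, $|F_{d}'(s)|\aleq_{d}s^{-d/2-1}$ (Lemma~\ref{lem_Fd'est}) together with conservation of the radial impulse $\nrm{r^{d-1}\omg(s)}_{L^{1}(\Pi)}\aeq_{d}\nrm{r\omg_{0}}_{L^{1}(\bbR^{d})}$ gives the pointwise bound $|u^{r}_{\mathrm{far}}(s,\br r,\br z)|\le C\br r^{-d}$ exactly as in the proof of Lemma~\ref{lem_mafmar}; since $\br r\ge r_{1}$ on $\supp\phi'$, the far contribution to the derivative is $\le\tfrac{C}{(r_{2}-r_{1})r_{1}^{d}}\,m_{r_{1}}(s)\le J_{d}(r_{1},r_{2})\,m_{r_{1}}(s)$, using $r_{2}-r_{1}<r_{2}$.

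The near part is the core of the argument, and here a pointwise estimate of $u^{r}$ is no longer sufficient. I will instead symmetrize the double integral $\iint_{\Pi}\iint_{|\br r-\rho|\le\br r/2}\phi'(\br r)\,\tfrac{\rho^{d/2-2}(\br z-\zeta)}{\br r^{d/2}}\,F_{d}'(S)\,\omg(s,\br r,\br z)\,\omg(s,\rho,\zeta)$, where $S=\tfrac{(\br r-\rho)^{2}+(\br z-\zeta)^{2}}{\br r\rho}$, in the two variable pairs. Because of the odd dependence of the Biot--Savart kernel on $\br z-\zeta$, the symmetrized weight becomes $(\br z-\zeta)F_{d}'(S)(\br r\rho)^{d/2-2}\big[\tfrac{\phi'(\br r)}{\br r^{d-2}}-\tfrac{\phi'(\rho)}{\rho^{d-2}}\big]$, and by the mean value theorem the bracket is bounded by $|\br r-\rho|\big(\tfrac{C}{(r_{2}-r_{1})^{2}r_{1}^{d-2}}+\tfrac{C}{(r_{2}-r_{1})r_{1}^{d-1}}\big)$; combined with $|F_{d}'(S)|\aleq S^{-1}$ (and, for the region where $S$ is large because $|\br z-\zeta|$ is large, the decay $|F_d'(S)|\aleq S^{-d/2-1}$) this absorbs the diagonal singularity. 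Estimating what remains — bounding one $\omg$-factor by the conserved quantity $\nrm{r^{-(d-2)}\omg(s)}_{L^{\ift}(\bbR^{d})}=\nrm{r^{-(d-2)}\omg_{0}}_{L^{\ift}(\bbR^{d})}$ against the size of the near-diagonal region, the remaining mass factor by the trivial estimate $m_{\rho}(s)\aleq_{d}\nrm{r\omg_{0}}_{L^{1}(\bbR^{d})}\rho^{-(d-1)}$, and keeping a single $\omg$-integration over $\{\br r\ge r_{1}\}$ to produce one factor $m_{r_{1}}(s)$ — should again produce a contribution of the form $J_{d}(r_{1},r_{2})\,m_{r_{1}}(s)$. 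Summing the two parts and integrating in $s$ then gives \eqref{eq_mr2r1}.

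I expect the main obstacle to be precisely this near-diagonal estimate: arranging the subdomains and the symmetrized kernel so that the singular integral converges, and — more delicately — carrying out the bookkeeping so that exactly one power of $m_{r_{1}}(s)$ survives (rather than $m_{r_{1}/2}(s)$, or no factor of $m$ at all) with exactly the weight $r_{2}(r_{2}-r_{1})^{-2}r_{1}^{-d}$. The far part and the reduction steps are routine once \eqref{eq_urform}, Lemma~\ref{lem_Fd'est}, and the two conservation laws (the $L^{\ift}$ norm of the relative vorticity and the radial impulse) are in hand.
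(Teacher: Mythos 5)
Your plan follows essentially the same route as the paper's proof: a smooth radial cutoff vanishing on $[0,r_1]$, conversion of $m_{r_2}(t)$ into a time integral of $\tfrac{d}{ds}\iint\phi\,\omg$, integration by parts via \eqref{eq_vortformNd} and the divergence-free identity to reduce to $\iint\phi'\,u^r\,\omg$, and a symmetrized double integral in which the bracket $\tfrac{\phi'(\br r)}{\br r^{d-2}}-\tfrac{\phi'(\rho)}{\rho^{d-2}}$ gains a factor $|\br r-\rho|$ by the mean value theorem. Your far-part handling via the pointwise $\br r^{-d}$ decay of $u^r$, rather than keeping the far part inside the symmetrized kernel as the paper does, is a legitimate shortcut; the resulting weight $\tfrac{1}{(r_2-r_1)r_1^d}$ is dominated by $J_d(r_1,r_2)$.

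The gap is in the near part, exactly where you flag it, and it has two sources. First, the order of operations: the near region $\{|\br r-\rho|\le\br r/2\}$ is not invariant under the swap $(\br r,\br z)\leftrightarrow(\rho,\zeta)$, so you cannot symmetrize the near-part integral in isolation --- the bracket $\tfrac{\phi'(\br r)}{\br r^{d-2}}-\tfrac{\phi'(\rho)}{\rho^{d-2}}$ will not appear. You must symmetrize the whole double integral first, use the symmetry of the resulting kernel to localize to $\{r_1\le r\le r_2\}$, and only then split into near/far cases (which is the order the paper takes). Second, the final bookkeeping you propose is over-specified and would not close: once the mean-value gain $|\br r-\rho|$ is cancelled against $|F_d'(S)|\lesssim S^{-1}$, the near-diagonal kernel is bounded --- no singularity remains --- so the sup norm $\nrm{r^{-(d-2)}\omg_0}_{L^\ift}$ plays no role, and indeed the constant in the lemma is independent of it. The correct allocation of the two vorticity factors is simpler: the bounded kernel carries a weight comparable to $\tfrac{\br r^{d-1}}{(r_2-r_1)^2 r_1^{d-1}}$ plus $\tfrac{\br r^{d-1}}{(r_2-r_1)r_1^{d}}$ (coming respectively from the $\phi''$ and $\phi'$ terms in the derivative of $\phi'(s)/s^{d-2}$), so one $\omg$-integration produces the conserved radial impulse $\nrm{r^{d-1}\omg_0}_{L^1(\Pi)}$ and the other, restricted to the annulus $r\in[r_1,r_2]$ supporting the symmetrized kernel, produces $m_{r_1}(s)$; nothing else is needed.
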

\begin{proof}
	First, we let $ \phi\in C_{c}^{\ift}(\bbR;[0,1]) $ be a smooth even function that satisfies $ \phi(0)=1 $, monotone on $ [0,\ift) $, and supported on $ [-1,1] $. 
	Then for each $ r_{1}, r_{2}>0 $ satisfying $ S_{0}\leq r_{1}<r_{2} $, we define a smooth monotone function $ \eta_{r_{1}r_{2}}:[0,\ift)\To[0,1] $, which is given as
	\begin{equation}\label{eq_etar1r2}
		\eta_{r_{1}r_{2}}(s):=\begin{cases}
			1-\phi(\frac{s-r_{1}}{r_{2}-r_{1}}) & \text{ if }s>r_{1}\\
			0 & \text{ if }0\leq s\leq r_{1}
		\end{cases}.
	\end{equation}
	Note that $ \eta_{r_{1}r_{2}} $ satisfies
	\begin{align}
		\eta_{r_{1}r_{2}}|_{r=0}&=0\label{eq_etabdry},\\
		|\eta_{r_{1}r_{2}}'(r)|&\leq\frac{C}{r_{2}-r_{1}}\quad\text{for any}\quad r>0,\label{eq_etaderbd}\\
		|\eta_{r_{1}r_{2}}'(r)-\eta_{r_{1}r_{2}}'(\br{r})|&\leq\frac{C|r-\br{r}|}{(r_{2}-r_{1})^{2}}\quad\text{for any}\quad r,\br{r}>0,\label{eq_etadermvt}
	\end{align}
	for some absolute constant $ C>0 $. Also for any $ t\geq0 $, we define
	$$ g(t):=\iint_{\Pi}\eta_{r_{1}r_{2}}(r)\omg(t,r,z)dzdr. $$
	Since $ S_{0}\leq r_{1}<r_{2} $, we have
	\begin{equation*}
		m_{r_{2}}(t)\leq g(t)\leq g(0)+\int_{0}^{t}|g'(s)|ds=\int_{0}^{t}|g'(s)|ds.
	\end{equation*}
	Our goal is to show
	\begin{equation}\label{eq_g'test}
		|g'(t)|\leq J_{d}(r_{1},r_{2})m_{r_{1}}(t).
	\end{equation}
	Once the above is shown, then the proof of \eqref{eq_mr2r1} is complete.
	
	\medskip
	
	\noindent To begin with, we take the time derivative of $ g $:
	\begin{equation*}
		\begin{split}
			g'(t)&=\iint_{\Pi}\eta_{r_{1}r_{2}}\rd_{t}\omg dzdr\\
			&=\iint_{\Pi}\eta_{r_{1}r_{2}}\bigg[-(u^{r}\rd_{r}+u^{z}\rd_{z})\omg+\frac{d-2}{r}u^{r}\omg\bigg]dzdr\\
			&=\iint_{\Pi}\bigg[[u^{r}\eta_{r_{1}r_{2}}'+\eta_{r_{1}r_{2}}(\underbrace{\rd_{r}u^{r}+\rd_{z}u^{z}}_{=-\frac{d-2}{r}u^{r}})
			]+\frac{d-2}{r}\eta_{r_{1}r_{2}} u^{r}\bigg]\omg dzdr=\iint_{\Pi}\eta_{r_{1}r_{2}}'u^{r}\omg dzdr.
		\end{split}
	\end{equation*}
	We used the fact that $ \omg $ solves \eqref{eq_vortformNd} in the second equality. In the third one, we used integration by parts, where the boundary term vanishes because $ \omg $ is compactly supported and $ \eta_{r_{1}r_{2}}|_{r=0}=0 $. In the last one, we used the divergence-free condition
	$$ \nb\cdot u=\rd_{r}u^{r}+\rd_{z}u^{z}+\frac{d-2}{r}u^{r}=0. $$
	Now we have
	\begin{equation*}
		\begin{split}
			g'(t)&=\iint_{\Pi}\eta_{r_{1}r_{2}}'(r)u^{r}(t,r,z)\omg(t,r,z)dzdr\\
			&\simeq_{d}\iint_{\Pi}\iint_{\Pi}\frac{\br{r}^{d/2-2}(z-\br{z})}{r^{d/2}}F_{d}'\bigg(\frac{(r-\br{r})^{2}+(z-\br{z})^{2}}{r\br{r}}\bigg)\eta_{r_{1}r_{2}}'(r)\omg(t,\br{r},\br{z})\omg(t,r,z)d\br{z}d\br{r}dzdr.
		\end{split}
	\end{equation*}
	Then symmetrizing the above integral, we have
	\begin{equation}\label{eq_g'tsymm}
		\begin{split}
			g'(t)
			&\simeq_{d}\iint_{\Pi}\iint_{\Pi}[\br{r}^{d-2}\eta_{r_{1}r_{2}}'(r)-r^{d-2}\eta_{r_{1}r_{2}}'(\br{r})]\frac{z-\br{z}}{(r\br{r})^{d/2}}F_{d}'\bigg(\frac{(r-\br{r})^{2}+(z-\br{z})^{2}}{r\br{r}}\bigg)\omg(t,\br{r},\br{z})\omg(t,r,z)d\br{z}d\br{r}dzdr.
		\end{split}
	\end{equation}
	We denote the kernel of the above integral as
	\begin{equation*}
		\begin{split}
			L(r,z,\br{r},\br{z})&:=[\br{r}^{d-2}\eta_{r_{1}r_{2}}'(r)-r^{d-2}\eta_{r_{1}r_{2}}'(\br{r})]\frac{z-\br{z}}{(r\br{r})^{d/2}}F_{d}'\bigg(\frac{(r-\br{r})^{2}+(z-\br{z})^{2}}{r\br{r}}\bigg),
		\end{split}
	\end{equation*}
	and use the notation as an abbreviation: $$ \omg=\omg(t,r,z)dzdr,\quad \br{\omg}=\omg(t,\br{r},\br{z})d\br{z}d\br{r}. $$ Then note that $ L(r,z,\br{r},\br{z}) $ is symmetric, and that $ L(r,z,\br{r},\br{z})=0 $ if $ r\in[r_{1},r_{2}]^{C} $ and $ \br{r}\in[r_{1},r_{2}]^{C} $, because of $ \eta_{r_{1}r_{2}}\equiv0 $ on $ [r_{1},r_{2}]^{C} $, which means
	\begin{equation*}
		\begin{split}
			\supp L
			&\subset\lbrace (r,z,\br{r},\br{z})\in\Pi\times\Pi : r_{1}\leq r\leq r_{2}\rbrace\cup\lbrace (r,z,\br{r},\br{z})\in\Pi\times\Pi : r_{1}\leq \br{r}\leq r_{2} \rbrace.
		\end{split}
	\end{equation*}
	From this, we have
	\begin{equation*}
		\begin{split}
			|g'(t)|&\lesssim_{d}\iint_{\Pi}\iint_{\Pi}|L(r,z,\br{r},\br{z})|\br{\omg}\omg\\
			&\leq\iint_{r_{1}\leq r\leq r_{2}}\iint_{\Pi}|L(r,z,\br{r},\br{z})|\br{\omg}\omg+\iint_{\Pi}\iint_{r_{1}\leq \br{r}\leq r_{2}}|L(r,z,\br{r},\br{z})|\br{\omg}\omg\\
			&=2\iint_{r_{1}\leq r\leq r_{2}}\iint_{\Pi}|L(r,z,\br{r},\br{z})|\br{\omg}\omg.
		\end{split}
	\end{equation*}
	Now we split $ L=L_{1}+L_{2} $, where
	\begin{equation*}
		\begin{split}
			L_{1}(r,z,\br{r},\br{z})&:=\br{r}^{d-2}[\eta_{r_{1}r_{2}}'(r)-\eta_{r_{1}r_{2}}'(\br{r})]\frac{z-\br{z}}{(r\br{r})^{d/2}}F_{d}'\bigg(\frac{(r-\br{r})^{2}+(z-\br{z})^{2}}{r\br{r}}\bigg),\\
			L_{2}(r,z,\br{r},\br{z})&:=(\br{r}^{d-2}-r^{d-2})\eta_{r_{1}r_{2}}'(\br{r})\frac{z-\br{z}}{(r\br{r})^{d/2}}F_{d}'\bigg(\frac{(r-\br{r})^{2}+(z-\br{z})^{2}}{r\br{r}}\bigg).
		\end{split}
	\end{equation*}
	From this, we get
	\begin{equation*}
		\begin{split}
			|g'(t)|&\lesssim_{d}
			\iint_{r_{1}\leq r\leq r_{2}}\iint_{\Pi}|L(r,z,\br{r},\br{z})|\br{\omg}\omg\\
			&\leq 
			\underbrace{\iint_{r_{1}\leq r\leq r_{2}}\iint_{\Pi}|L_{1}(r,z,\br{r},\br{z})|\br{\omg}\omg}_{=:(A)}+\underbrace{\iint_{r_{1}\leq r\leq r_{2}}\iint_{\Pi}|L_{2}(r,z,\br{r},\br{z})|\br{\omg}\omg}_{=:(B)}.
		\end{split}
	\end{equation*}
	To estimate the term $ |L_{1}| $, we can consider the following two cases:\\
	
	\noindent\textbf{Case I.} $ r_{1}\leq r\leq r_{2},\;|r-\br{r}|>r/2 $.
	
	\medskip
	\noindent In this case, 
	the term $ |L_{1}| $ is estimated as
	\begin{equation*}
		\begin{split}
			|L_{1}(r,z,\br{r},\br{z})|&\lesssim_{d}
			\br{r}^{d-2}\cdot\frac{1
			}{(r_{2}-r_{1})^{2}}\cdot|r-\br{r}|\cdot\frac{|z-\br{z}|}{(r\br{r})^{d/2}}\cdot\frac{(r\br{r})^{d/2+1}}{[(r-\br{r})^{2}+(z-\br{z})^{2}]^{d/2+1}}\\
			&\lesssim_{d}
			\frac{1
			}{(r_{2}-r_{1})^{2}}\cdot\frac{r\br{r}^{d-1}}{[(r-\br{r})^{2}+(z-\br{z})^{2}]^{d/2}}\\
			&\lesssim_{d}
			\frac{1
			}{(r_{2}-r_{1})^{2}}\cdot\frac{
			\br{r}^{d-1}}{r^{d-1}}\leq\frac{
			\br{r}^{d-1}}{(r_{2}-r_{1})^{2}r_{1}^{d-1}}.
		\end{split}
	\end{equation*}
	\textbf{Case II.} $ r_{1}\leq r\leq r_{2},\;|r-\br{r}|\leq r/2 $.
	
	\medskip
	\noindent Here, 
	we can estimate the term $ |L_{1}| $ as
	\begin{equation*}
		\begin{split}
			|L_{1}(r,z,\br{r},\br{z})|&\lesssim_{d}
			\br{r}^{d-2}\cdot\frac{1
			}{(r_{2}-r_{1})^{2}}\cdot|r-\br{r}|\cdot\frac{|z-\br{z}|}{(r\br{r})^{d/2}}\cdot\frac{r\br{r}}{(r-\br{r})^{2}+(z-\br{z})^{2}}\\
			&\leq\frac{1
			}{(r_{2}-r_{1})^{2}}\cdot\bigg(\frac{\br{r}}{r}\bigg)^{d/2-1}
			\lesssim_{d}
			\frac{
				\br{r}^{d-1}}{(r_{2}-r_{1})^{2}r_{1}^{d-1}}.
		\end{split}
	\end{equation*}
	Thus, we obtain
	\begin{equation}\label{eq_A1}
		\begin{split}
			(A)&\lesssim_{d}
			\frac{1
			}{(r_{2}-r_{1})^{2}r_{1}^{d-1}}\iint_{r_{1}\leq r\leq r_{2}}\iint_{\Pi}\br{r}^{d-1}\br{\omg}\omg\\
			&\leq\frac{1
			}{(r_{2}-r_{1})^{2}r_{1}^{d-1}}\cdot\underbrace{\nrm{r^{d-1}\omg}_{L^{1}(\Pi)}}_{=\nrm{r^{d-1}\omg_{0}}_{L^{1}(\Pi)}}\cdot m_{r_{1}}(t).
		\end{split}
	\end{equation}

	\noindent Now let us focus our attention to $ (B) $. Note that $ L_{2}(r,z,\br{r},\br{z})=0 $ if $ \br{r}\in[r_{1},r_{2}]^{C} $. From this, we have
	\begin{equation*}
		\begin{split}
			(B)&=\iint_{r_{1}\leq r\leq r_{2}}\iint_{r_{1}\leq \br{r}\leq r_{2}}|L_{2}(r,z,\br{r},\br{z})|\br{\omg}\omg.
		\end{split}
	\end{equation*}
	Similarly, to estimate the term $ |L_{2}| $, let us consider these two cases:\\
	
	\noindent\textbf{Case I'.} $ r_{1}\leq r\leq r_{2},\;r_{1}\leq\br{r}\leq r_{2},\;|r-\br{r}|>r/2 $.
	
	\medskip
	\noindent In this case, the term $ |L_{2}| $ is estimated as
	\begin{equation*}
		\begin{split}
			|L_{2}(r,z,\br{r},\br{z})|&\lesssim_{d}
			|r-\br{r}|\cdot\underbrace{(\br{r}^{d-3}+\br{r}^{d-4}r+\cdots+\br{r}r^{d-4}+r^{d-3})}_{(d-2)\text{ -terms}}\cdot\frac{1
			}{r_{2}-r_{1}}\cdot\frac{|z-\br{z}|}{(r\br{r})^{d/2}}\cdot\frac{(r\br{r})^{d/2+1}}{[(r-\br{r})^{2}+(z-\br{z})^{2}]^{d/2+1}}\\
			&\lesssim_{d}\frac{1}{r_{2}-r_{1}}\cdot\bigg(\frac{\br{r}^{d-2}}{r^{d-1}}+\frac{\br{r}^{d-3}}{r^{d-2}}+\cdots+\frac{\br{r}^{2}}{r^{3}}+\frac{\br{r}}{r^{2}}\bigg)=\frac{\br{r}^{d-1}}{r_{2}-r_{1}}\cdot\bigg(\frac{1}{r^{d-1}\br{r}}+\frac{1}{r^{d-2}\br{r}^{2}}+\cdots+\frac{1}{r^{3}\br{r}^{d-3}}+\frac{1}{r^{2}\br{r}^{d-2}}\bigg)\\
			&
			\lesssim_{d}
			\frac{
				\br{r}^{d-1}}{(r_{2}-r_{1})r_{1}^{d}}.
		\end{split}
	\end{equation*}
	\textbf{Case II'.} $ r_{1}\leq r\leq r_{2},\;r_{1}\leq\br{r}\leq r_{2},\;|r-\br{r}|\leq r/2 $.
	
	\medskip
	\noindent Here, we can estimate the term $ |L_{2}| $ as
	\begin{equation*}
		\begin{split}
			|L_{2}(r,z,\br{r},\br{z})|&\lesssim_{d}
			|r-\br{r}|\cdot(\br{r}^{d-3}+\br{r}^{d-4}r+\cdots+\br{r}r^{d-4}+r^{d-3})\cdot\frac{1
			}{r_{2}-r_{1}}\cdot\frac{|z-\br{z}|}{(r\br{r})^{d/2}}\cdot\frac{r\br{r}}{(r-\br{r})^{2}+(z-\br{z})^{2}}\\
			&\lesssim_{d}\frac{1}{r_{2}-r_{1}}\cdot\frac{r^{d/2-2}}{\br{r}^{d/2-1}}\lesssim_{d}
			\frac{
				\br{r}^{d-1}}{(r_{2}-r_{1})r_{1}^{d}}.
		\end{split}
	\end{equation*}
	Hence, we obtain
	\begin{equation}\label{eq_B1}
		\begin{split}
			(B)&\leq\frac{C}{(r_{2}-r_{1})r_{1}^{d}}\iint_{r_{1}\leq r\leq r_{2}}\iint_{r_{1}\leq\br{r}\leq r_{2}}\br{r}^{d-1}\br{\omg}\omg\\
			&\leq\frac{C}{(r_{2}-r_{1})r_{1}^{d}}\cdot\nrm{r^{d-1}\omg_{0}}_{L^{1}(\Pi)}\cdot m_{r_{1}}(t).
		\end{split}
	\end{equation}

	\noindent Finally, summing up \eqref{eq_A1} and \eqref{eq_B1}, we have
	\begin{equation*}
		\begin{split}
			|g'(t)|&\leq C\nrm{r^{d-1}\omg_{0}}_{L^{1}(\Pi)}\cdot\bigg[\frac{1}{(r_{2}-r_{1})^{2}r_{1}^{d-1}}+\frac{1}{(r_{2}-r_{1})r_{1}^{d}}\bigg]\cdot m_{r_{1}}(t)\\
			&=C\nrm{r\omg_{0}}_{L^{1}(\bbR^{d})}\cdot\frac{r_{2}}{(r_{2}-r_{1})^{2}r_{1}^{d}}\cdot m_{r_{1}}(t).
		\end{split}
	\end{equation*}
\end{proof}
\noindent The second lemma is an estimate of $ m_{r}(t) $ in terms of only $ r $ and $ t $, which is a direct application of the previous lemma.
\begin{lem}\label{lem_techlem3}
	There exists $ C>0 $ 
	such that for any integer $ p\geq1 $, any $ t\geq0 $ and any $ r\geq 2S_{0} $, we have
	\begin{equation}\label{eq_mrtkk}
		m_{r}(t)\leq \frac{C^{p}p^{p}t^{p}}{r^{p(d+1)+d-1}}.
	\end{equation}
\end{lem}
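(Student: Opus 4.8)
The plan is to iterate Lemma \ref{lem_mafmar2}, bootstrapping the power of $r$ in the denominator with each application at the cost of a factor of $t$. First I would establish a base case: since $\omg$ is single-signed and $r^{d-1}\omg$ is integrable with conserved $L^1$ norm, Chebyshev's inequality in the radial variable gives $m_r(t) \le r^{-(d-1)}\nrm{r^{d-1}\omg(t)}_{L^1(\Pi)} = C r^{-(d-1)}$ for all $t\ge0$ and $r>0$; this is exactly \eqref{eq_mrtkk} with $p=0$ (interpreting $0^0=1$), so in fact I will treat the statement as starting from $p=1$ and use $p=0$ as the seed.

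The inductive step: suppose $m_r(t) \le C^p p^p t^p r^{-(p(d+1)+d-1)}$ holds for all $r\ge 2S_0$, $t\ge0$. I would apply Lemma \ref{lem_mafmar2} with $r_2 = r$ and $r_1 = r/2$ (note $r_1 = r/2 \ge S_0$, so the hypothesis $S_0 \le r_1 < r_2$ is satisfied). Then $J_d(r/2,r) = \frac{Cr}{(r/2)^2 (r/2)^d} = \frac{C'}{r^{d+1}}$, and
\begin{equation*}
	m_r(t) \le J_d(r/2,r)\int_0^t m_{r/2}(s)\,ds \le \frac{C'}{r^{d+1}}\cdot C^p p^p \cdot \frac{2^{p(d+1)+d-1}}{r^{p(d+1)+d-1}}\int_0^t s^p\,ds = \frac{C' C^p p^p \, 2^{p(d+1)+d-1}}{r^{(p+1)(d+1)+d-1}}\cdot\frac{t^{p+1}}{p+1}.
\end{equation*}
The exponent of $r$ is now $(p+1)(d+1)+d-1$, as required. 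The work is in checking that the constant and the combinatorial factor close up: I need $C' C^p p^p \, 2^{p(d+1)+d-1}/(p+1) \le C^{p+1}(p+1)^{p+1}$. Since $p^p/(p+1) \le (p+1)^{p+1}$ trivially, and $2^{p(d+1)+d-1} = 2^{d-1}(2^{d+1})^p$ is absorbed by enlarging the base constant $C$ to exceed $2^{d+1}C' \cdot 2^{d-1}$ (a $d$-dependent choice made once at the start, then frozen), the induction goes through. One should be slightly careful that the $C$ in Lemma \ref{lem_techlem3} is a single constant independent of $p$; this is fine because the only $p$-dependent growth, namely $p^p$ and the geometric factor $(2^{d+1})^p$, is exactly what the $C^p p^p$ on the right is designed to absorb.

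I do not expect a genuine obstacle here — the lemma is a routine induction once Lemma \ref{lem_mafmar2} is in hand. The one point demanding care is bookkeeping the dependence of constants: Lemma \ref{lem_mafmar2}'s constant depends on $d$ and $\nrm{r\omg_0}_{L^1(\bbR^d)}$ (via $\nrm{r^{d-1}\omg_0}_{L^1(\Pi)}$), and I must verify that iterating it does not let the $p$-fold product of these constants escape the claimed $C^p$ form — it does not, since each application contributes the \emph{same} constant, producing a clean $p$-th power. I would also note explicitly that the restriction $r \ge 2S_0$ (rather than $r\ge S_0$) is what guarantees $r/2 \ge S_0$ at \emph{every} stage of the iteration, so the hypothesis of Lemma \ref{lem_mafmar2} remains valid throughout; halving repeatedly never pushes us below $S_0$ as long as we stop the recursion at the top level $r\ge 2S_0$ and only the intermediate radius $r/2$ needs to clear $S_0$.
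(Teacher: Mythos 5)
Your overall strategy — iterate Lemma \ref{lem_mafmar2} $p$ times — is the right idea, but the halving scheme $r \to r/2 \to r/4 \to \cdots$ introduces a superexponential constant that cannot be absorbed, so the induction does not close. To see the failure concretely: in the inductive step you plug $\rho = r/2$ into the denominator $\rho^{p(d+1)+d-1}$ of the induction hypothesis, which produces the factor $2^{p(d+1)+d-1}$. To close the induction you would need
\begin{equation}
	\frac{C'\,C^{p}p^{p}\,2^{p(d+1)+d-1}}{p+1}\ \leq\ C^{p+1}(p+1)^{p+1},
	\qquad\text{i.e.}\qquad
	\frac{C'\,2^{d-1}\,(2^{d+1})^{p}\,p^{p}}{(p+1)^{p+2}}\ \leq\ C,
\end{equation}
and since $p^{p}/(p+1)^{p+2}\leq 1$, the left side still contains the factor $(2^{d+1})^{p}$, which is unbounded in $p$; no fixed constant $C$ can dominate it. Equivalently, if you unroll your recursion, the $j$-th application of Lemma \ref{lem_mafmar2} has kernel $J_{d}(r/2^{j},r/2^{j-1})\simeq 2^{\,j(d+1)+1}/r^{d+1}$, whose product over $j=1,\dots,p$ is of size $2^{(d+1)p(p+1)/2}/r^{p(d+1)}$ — a $2^{O(p^{2})}$ factor that swamps the target $C^{p}p^{p}$. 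Your remark that the geometric factor ``is absorbed by enlarging the base constant'' is where the bookkeeping breaks: the $p$-th power $(2^{d+1})^{p}$ is not a constant. (There is also a secondary, smaller issue: when $2S_{0}\leq r<4S_{0}$ the radius $r/2$ lies below $2S_{0}$, so the induction hypothesis is not even available there — though this could be patched if the main constant problem did not exist.)

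The paper sidesteps this entirely by \emph{not} halving at each step. It introduces the fine partition $\alpha_{j}:=r\bigl(1-\tfrac{j}{2p}\bigr)$, $j=0,\dots,p$, so that every intermediate radius satisfies $r/2\leq\alpha_{j}\leq r$. Then each kernel is uniformly bounded, $J_{d}(\alpha_{j},\alpha_{j-1})\leq Cp^{2}/r^{d+1}$ — the $p^{2}$ comes from the small gap $(\alpha_{j-1}-\alpha_{j})^{2}=(r/(2p))^{2}$, not from the radius shrinking. The $p$-fold product gives $(Cp^{2}/r^{d+1})^{p}$ rather than a $2^{O(p^{2})}$ factor; the nested integral gives $t^{p}/p!$; the base case at $\alpha_{p}=r/2$ supplies the remaining $r^{-(d-1)}$; and the elementary inequality $p^{2p}/p!\leq(ep)^{p}$ converts $p^{2p}/p!$ into the claimed $C^{p}p^{p}$. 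The key lesson is that all intermediate radii must remain comparable to $r$ (here, in $[r/2,r]$) so that each iterate contributes the same $r^{-(d+1)}$ and a $p$-dependent but $j$-independent constant.
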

\begin{proof}
	We denote $ \alp_{j}:=r(1-\frac{j}{2p}) $ for $ j=0,1,\cdots,p $. Then note that we have $ \alp_{0}=r $, $ \alp_{p}=r/2 $, and for each $ j=1,\cdots,p $, by recalling the definition of $ J_{d}(r_{1},r_{2}) $ in Lemma \ref{lem_mafmar2}, we have
	\begin{equation*}
		\begin{split}
			J_{d}(\alp_{j},\alp_{j-1})&\leq\frac{Cr(1-\frac{j-1}{2p})}{(\frac{r}{2p})^{2}r^{d}(1-\frac{j}{2p})^{d}}
			\leq\frac{Cp^{2}}{r^{d+1}}.
		\end{split}
	\end{equation*}
	Using this and the inequality \eqref{eq_mr2r1} from Lemma \ref{lem_mafmar2}, for each $ j=1,\cdots,p $, we get
	\begin{equation}\label{eq_malpj}
		\begin{split}
			m_{\alp_{j-1}}(t)&\leq J_{d}(\alp_{j},\alp_{j-1})\int_{0}^{t}m_{\alp_{j}}(s)ds\leq\frac{Cp^{2}}{r^{d+1}}\int_{0}^{t}m_{\alp_{j}}(s)ds.
		\end{split}
	\end{equation}
	From this, we get
	\begin{equation*}
		\begin{split}
			m_{r}(t)&=m_{\alp_{0}}(t)\leq\frac{Cp^{2}}{r^{d+1}}\int_{0}^{t}m_{\alp_{1}}(s_{1})ds_{1}\\
			&\leq\bigg(\frac{Cp^{2}}{r^{d+1}}\bigg)^{2}\int_{0}^{t}\int_{0}^{s_{1}}m_{\alp_{2}}(s_{2})ds_{2}ds_{1}\leq\cdots\leq\bigg(\frac{Cp^{2}}{r^{d+1}}\bigg)^{p-1}\int_{0}^{t}\int_{0}^{s_{1}}\cdots\int_{0}^{s_{p-3}}\int_{0}^{s_{p-2}}m_{\alp_{p}}(s_{p-1})
			ds_{p-1}ds_{p-2}\cdots ds_{2}ds_{1}\\
			&\leq\bigg(\frac{Cp^{2}}{r^{d+1}}\bigg)^{p}\int_{0}^{t}\int_{0}^{s_{1}}\cdots\int_{0}^{s_{p-3}}\int_{0}^{s_{p-2}}\int_{0}^{s_{p-1}}m_{r/2}(s_{p})
			ds_{p}ds_{p-1}ds_{p-2}\cdots ds_{2}ds_{1}.
		\end{split}
	\end{equation*}
	Then using the estimate
	\begin{equation*}
		\begin{split}
			m_{r/2
			}(s_{p})&=\iint_{
			\br{r}\geq r/2
			}\omg(s_{p},\br{r},\br{z})d\br{z}d\br{r}\\
			&\leq\bigg(\frac{r}{2}\bigg)^{-(d-1)}
			\iint_{\Pi}\br{r}^{d-1}\omg(s_{p},\br{r},\br{z})d\br{z}d\br{r}=\bigg(\frac{r}{2}\bigg)^{-(d-1)}\cdot\nrm{r^{d-1}\omg_{0}}_{L^{1}(\Pi)},
		\end{split}
	\end{equation*}
	we have
	\begin{equation*}
		\begin{split}
			m_{r}(t)&\leq\frac{C^{p}p^{2p}}{r^{p(d+1)}}\cdot\bigg(\frac{r}{2}\bigg)^{-(d-1)}\cdot\nrm{r^{d-1}\omg_{0}}_{L^{1}(\Pi)}
			\int_{0}^{t}\int_{0}^{s_{1}}\cdots\int_{0}^{s_{p-3}}\int_{0}^{s_{p-2}}\int_{0}^{s_{p-1}}
			1ds_{p}ds_{p-1}ds_{p-2}\cdots ds_{2}ds_{1}.
		\end{split}
	\end{equation*}
	Finally, from the computation of the following term
	\begin{equation*}
		\begin{split}
			\int_{0}^{t}\int_{0}^{s_{1}}\cdots\int_{0}^{s_{p-3}}\int_{0}^{s_{p-2}}\int_{0}^{s_{p-1}}1ds_{p}ds_{p-1}ds_{p-2}\cdots ds_{2}ds_{1}&=\int_{0}^{t}\int_{0}^{s_{1}}\cdots\int_{0}^{s_{p-3}}\frac{(s_{p-2})^{2}}{2}ds_{p-2}\cdots ds_{2}ds_{1}\\
			&=\cdots=\int_{0}^{t}\frac{(s_{1})^{p-1}}{(p-1)!}ds_{1}=\frac{t^{p}}{p!},
		\end{split}
	\end{equation*}
	and the simple inequality $ \frac{p^{p}}{p!}\leq e^{p} $, we obtain
	\begin{equation*}
		\begin{split}
			m_{r}(t)&\leq\frac{C^{p}p^{2p}}{r^{p(d+1)+d-1}}\cdot\frac{t^{p}}{p!}\leq\frac{C^{p}p^{2p}}{r^{p(d+1)+d-1}}\cdot\frac{e^{p}t^{p}}{p^{p}}\\
			&\leq\frac{C^{p}p^{p}t^{p}}{r^{p(d+1)+d-1}}.
		\end{split}
	\end{equation*}
\end{proof}

\noindent The final lemma is about some facts from 
elementary computations. The first computation, where we roughly have $ k\approx\tau\ln y $, enables us to get the decay rate of $ m_{r} $ as fast as we want it to be. The second computation provides the necessary condition that $ r $ must satisfy in order to obtain the decay of $ m_{r} $.
\begin{lem}\label{lem_factanalysis}
	(a) For any integer $ \tau\geq1 $, there exist $ C=C(\tau)>0 $ and $ y_{0}=y_{0}(\tau)>1 $ such that for any $ y\geq y_{0} $, there exists an integer $ k=k(\tau,y)\geq1 $ that satisfies
	\begin{equation}\label{eq_xkl}
		y^{\tau/k}k\leq C\ln(e+y).
	\end{equation}
	(b) For any integer $ d\geq3 $ and any $ B>0 $, there exists $ A=A(d,B)>0 $ such that for any $ x\geq0 $ and any $ y\geq0 $ satisfying
	$$ y\geq A[(1+x)\ln(e+x)]^{1/(d+1)}, $$
	we have
	\begin{equation}\label{eq_yxlny}
		y\geq B[(1+x)\ln(e+y)]^{1/(d+1)}.
	\end{equation}
\end{lem}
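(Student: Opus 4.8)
The plan is to prove the two parts separately, both by elementary estimates.

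For part (a), I follow the heuristic $k\approx\tau\ln y$ indicated before the statement and simply take $k:=\lceil\tau\ln y\rceil$, the least integer with $k\geq\tau\ln y$. Choosing $y_{0}:=e^{1/\tau}$ guarantees $\tau\ln y\geq1$ whenever $y\geq y_{0}$, so that $k\geq1$ and moreover $\tau\ln y\leq k\leq\tau\ln y+1\leq2\tau\ln y$. The key point is that $k\geq\tau\ln y$ forces $y^{\tau/k}=e^{(\tau\ln y)/k}\leq e$; hence $y^{\tau/k}k\leq2e\tau\ln y\leq2e\tau\ln(e+y)$, and $C:=2e\tau$ works. (This choice is optimal up to a constant, since $k\mapsto k\,e^{(\tau\ln y)/k}$ is minimized at $k=\tau\ln y$ with minimal value $e\tau\ln y$.)

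For part (b), the only genuine content is that the gap between $\ln(e+y)$ and $\ln(e+x)$ is at most logarithmic in $y$, which a power $1/(d+1)$ with $d\geq3$ absorbs. I would dichotomize on whether $\ln(e+y)\leq2\ln(e+x)$ or not. In the first case the hypothesis $y^{d+1}\geq A^{d+1}(1+x)\ln(e+x)$ directly yields $y^{d+1}\geq\tfrac12 A^{d+1}(1+x)\ln(e+y)$, so any $A\geq2B$ (which in particular exceeds $2^{1/(d+1)}B$) gives the conclusion. In the second case, $\ln(e+y)>2\ln(e+x)=\ln((e+x)^{2})$ forces $e+y>(e+x)^{2}\geq x^{2}$, hence $x<\sqrt{e+y}$; since the hypothesis forces $y\geq A$ (because $(1+x)\ln(e+x)\geq1$) and $e+y\leq(e+1)y<4y$ for $y\geq1$, we get $1+x<1+2\sqrt{y}\leq3\sqrt{y}$. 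Then $B^{d+1}(1+x)\ln(e+y)\leq3B^{d+1}\sqrt{y}\ln(e+y)$, so it suffices that $y^{d+1/2}\geq3B^{d+1}\ln(e+y)$; this holds for all $y\geq A_{1}$ for a suitable threshold $A_{1}=A_{1}(d,B)$ since $d+\tfrac12>0$ makes the left side outgrow any logarithm. Taking $A:=\max\{2B,\,1,\,A_{1}(d,B)\}$ finishes both cases.

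I do not anticipate a real obstacle: the statement is a packaging of routine inequalities. The only things to keep straight are the bookkeeping of constants so that a single $C$ (and $y_{0}$) serves part (a) and a single $A$ serves both cases of part (b), and the observation that in (b) the hypothesis already forces $y$ to be large, so small-$y$ behaviour never needs separate attention.
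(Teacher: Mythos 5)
Your proof is correct. For part (a), your argument is essentially the same as the paper's: the key step in both is to choose $k$ close to $\tau\ln y$ so that $y^{\tau/k}=e^{(\tau\ln y)/k}\leq e$, after which $y^{\tau/k}k\lesssim\tau\ln y\lesssim\tau\ln(e+y)$. The paper phrases this through an auxiliary function $f_y(s)=y^{\tau/s}s-C\ln(e+y)$, shows $f_y$ is increasing on $[\tau\ln y,\infty)$, and checks $f_y(\tau\ln y+1)\leq 0$ for $y$ large, whereas you simply take $k=\lceil\tau\ln y\rceil$ and compute directly; same idea, slightly cleaner bookkeeping.

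For part (b) you take a genuinely different route. The paper first reduces, via the monotonicity of $y\mapsto y^{d+1}/\ln(e+y)$, to the boundary case $y=A[(1+x)\ln(e+x)]^{1/(d+1)}$, then proves a separate claim that the ratio $\ln\big(e+\alpha[(1+x)\ln(e+x)]^{1/(d+1)}\big)/\ln(e+x)$ is bounded by $C_0(1+\sqrt\alpha)$ uniformly in $x$, and finally defines $A$ implicitly by $B=A/[C_0^{1/(d+1)}(1+\sqrt A)^{1/(d+1)}]$. Your approach dichotomizes directly on whether $\ln(e+y)\leq 2\ln(e+x)$: in the first regime the hypothesis transfers immediately with $A\geq 2B$, and in the second regime you observe $x<\sqrt{e+y}$ so $1+x\lesssim\sqrt y$, reducing to $y^{d+1/2}\gtrsim\ln(e+y)$, which holds for $y$ (and hence $A$) large. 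Both are correct; your version avoids the intermediate ratio claim and the implicit equation for $A$, trading the paper's explicit $\sqrt\alpha$ dependence for a purely existential choice of $A_1(d,B)$. The underlying observation — that $\ln(e+y)$ exceeds $\ln(e+x)$ by at most a constant factor unless $x$ is tiny compared to $y$, in which case the power $1/(d+1)$ easily absorbs the logarithm — is the same in both.
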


\begin{proof}
	(a) We let $ \tau\geq1 $ be an integer, and for any $ y>1 $ and any $ s>0 $, we define
	$$ f_{y}(s):=y^{\tau/s}s-C\ln(e+y), $$
	where $ C>0 $ will be taken later. 
	Note that $ f_{y} $ is increasing for all $ s>\tau\ln y $:
	$$ f_{y}'(s)=y^{\tau/s}\bigg(1-\frac{\tau\ln y}{s}\bigg)>0\quad\text{for all}\quad s>\tau\ln y. $$
	Now we take $ C:=2e(\tau+1) $. 
	Then we can take $ y_{0}=y_{0}(\tau)>1 $ that satisfies
	\begin{equation}\label{eq_y0}
		f_{y}(\tau\ln y+1)\leq0\quad\text{for any}\quad y\geq y_{0},
	\end{equation}
	because we have
	\begin{equation*}
		\begin{split}
			f_{y}(\tau\ln y+1)&=y^{1/(\ln y+1/\tau)}(\tau\ln y+1)-C\ln(e+y)\\
			&\leq[\underbrace{y^{1/\ln y}}_{=e}(\tau+1)-C]\ln(e+y)
			\To-\ift\quad\text{as}\quad y\To\ift.
		\end{split}
	\end{equation*}
	Thus, from \eqref{eq_y0} and the fact that $ f_{y} $ is increasing on $ [\tau\ln y, \ift) $, 
	for any $ y\geq y_{0} $, we have
	$$ f_{y}(s)\leq0\quad\text{for any}\quad s\in[\tau\ln y,\tau\ln y+1]. $$
	Finally, we choose any positive integer $ k=k(\tau,y)\in [\tau\ln y,\tau\ln y+1] $ to obtain \eqref{eq_xkl}:
	$$ f_{y}(k)=y^{\tau/k}k-C\ln(e+y)\leq0. $$		
	(b) We let $ d\geq3 $. We first claim that there exists an absolute constant $ C_{0}>0 $ such that for any $ \alp>0 $ and any $ x\geq0 $, we have
	\begin{equation}\label{eq_aclaim}
		\frac{\ln\big[e+\alp[(1+x)\ln(e+x)]^{1/(d+1)}\big]}{\ln(e+x)}\leq C_{0}(1+\sqrt{\alp}).
	\end{equation}
	Once this claim is shown, then for any $ B>0 $, we take any $ A=A(d,B)>0 $ satisfying
	\begin{equation}\label{eq_BArel}
		B=\frac{A}{C_{0}^{1/(d+1)}(1+\sqrt{A})^{1/(d+1)}},
	\end{equation}
	where the above claim \eqref{eq_aclaim} gives us for any $ x\geq0 $,
	\begin{equation}\label{eq_BAlnrel}
		B^{d+1}=\frac{A^{d+1}}{C_{0}(1+\sqrt{A})}\leq\frac{A^{d+1}\ln(e+x)}{\ln\big[e+A[(1+x)\ln(e+x)]^{1/(d+1)}\big]}.
	\end{equation}
	Then for any $ y\geq A[(1+x)\ln(e+x)]^{1/(d+1)} $, using the fact that $ y^{d+1}/\ln(e+y) $ is monotonically increasing in $ y>0 $, we have
	\begin{equation*}
		\frac{y^{d+1}}{\ln(e+y)}\geq\frac{A^{d+1}(1+x)\ln(e+x)}{\ln\big[e+A[(1+x)\ln(e+x)]^{1/(d+1)}\big]}\geq B^{d+1}(1+x),
	\end{equation*}
	which proves \eqref{eq_yxlny}.
	
	\medskip
	
	\noindent Now let us prove our claim \eqref{eq_aclaim}. 
	First, note that there exists an absolute constant $ c>1 $ that satisfies
	$$ c\sqrt{1+x}\geq[(1+x)\ln(e+x)]^{1/(d+1)}\quad\text{for any}\quad x\geq0. $$
	Now we let $ \alp>0 $ and $ x\geq0 $. Then we have
	\begin{equation*}
		\frac{\ln\big[e+\alp[(1+x)\ln(e+x)]^{1/(d+1)}\big]}{\ln(e+x)}\leq\frac{\ln\big(e+c\alp\sqrt{1+x}\big)}{\ln(e+x)}.
	\end{equation*}
	Also, note that there exists $ x_{0}=x_{0}(\alp)>0 $ that satisfies
	$$ x_{0}=c\alp\sqrt{1+x_{0}}. $$
	In particular, we have
	$$ x_{0}=\frac{(c\alp)^{2}+c\alp\sqrt{(c\alp)^{2}+4}}{2}. $$
	Then if $ x\geq x_{0} $, we have $ x\geq c\alp\sqrt{1+x} $, which gives us
	\begin{equation}\label{eq_xgeqx0}
		\frac{\ln\big(e+c\alp\sqrt{1+x}\big)}{\ln(e+x)}\leq\frac{\ln(e+x)}{\ln(e+x)}=1.
	\end{equation}
	If $ x<x_{0} $, then we get
	\begin{equation*}
		\begin{split}
			\frac{\ln\big(e+c\alp\sqrt{1+x}\big)}{\ln(e+x)}&\leq\ln\big(e+c\alp\sqrt{1+x_{0}}\big)=\ln(e+x_{0})\\
			&=\ln\bigg(e+\frac{(c\alp)^{2}+c\alp\sqrt{(c\alp)^{2}+4}}{2}\bigg)\leq\ln\big(e+(c\alp+2)^{2}\big)\\
			&\leq 2\ln(c\alp+2+\sqrt{e})\leq 2\ln\big(e^{C'}(\alp+1)\big)\\
			&\leq2\big(C'+\ln(\alp+1)\big),
		\end{split}
	\end{equation*}
	where $ C'=\ln\max\lbrace c,2+\sqrt{e}\rbrace $. Then using $ \ln(\alp+1)\leq\sqrt{\alp} $, we obtain
	\begin{equation}\label{eq_xlx0}
		\begin{split}
			\frac{\ln\big(e+c\alp\sqrt{1+x}\big)}{\ln(e+x)}&\leq2\big(C'+\ln(\alp+1)\big)\\
			&\leq2(C'+\sqrt{\alp})\leq C_{0}(1+\sqrt{\alp}),
		\end{split}
	\end{equation}
	where $ C_{0}=2\max\lbrace C',1\rbrace $. Hence, from \eqref{eq_xgeqx0} and \eqref{eq_xlx0}, we have \eqref{eq_aclaim}:
	$$ \sup_{x\geq0}\frac{\ln\big(e+c\alp\sqrt{1+x}\big)}{\ln(e+x)}\leq C_{0}(1+\sqrt{\alp}). $$
\end{proof}

\section{Proof of the main result}\label{sec_pfmain}

We present the proof of Proposition \ref{prop_mafmar} and Theorem \ref{thm:3} in this section.

\subsection{Proof of Proposition \ref{prop_mafmar}}\label{subsec_proppf}

Using lemmas from section \ref{sec_singlesign}, let us prove Proposition \ref{prop_mafmar}.

\begin{proof}[Proof of Proposition \ref{prop_mafmar}]
	We let $ d\geq3 $, $ q\geq d $ be integers, and $ \omg_{0} $ satisfy the conditions from Theorem \ref{thm:3}. Then from Lemma \ref{lem_techlem3}, there exists $ C'>0 $ such that for any $ p\in\bbN $, any $ t\geq0 $, and any $ r\geq 2S_{0} $, we have \eqref{eq_mrtkk}:
	\begin{equation}\label{eq_mrtrp}
		m_{r}(t)\leq\frac{(C')^{p}p^{p}t^{p}}{r^{p(d+1)+d-1}}.
	\end{equation}
	Now we fix $ \tau:=q-d+1 $. Then from Lemma \ref{lem_factanalysis} (a), there exist $ C''
	>0 $ and $ y_{0}
	>1 $ such that for any $ r\geq y_{0} $, there exists an integer $ k=k(r)
	\geq1 $ that satisfies \eqref{eq_xkl} with $ y=r $:
	$$ r^{\tau/k}k\leq C''\ln(e+r). $$
	We can rewrite this as
	\begin{equation}\label{eq_rqkl}
		k^{k}\leq\frac{[C''\ln(e+r)]^{k}}{r^{q-d+1}}.
	\end{equation}
	Then we set 
	\begin{equation*}
		B:=(C'C'')^{1/(d+1)}.
	\end{equation*}
	Then from Lemma \ref{lem_factanalysis} (b), there exists $ A
	>0 $ such that for any $ t\geq0 $ and any $ r\geq A[(1+t)\ln(e+t)]^{1/(d+1)} $, we have \eqref{eq_yxlny} with $ x=t $ and $ y=r $:
	\begin{equation*}
		r\geq B[(1+t)\ln(e+r)]^{1/(d+1)}.
	\end{equation*}
	Rewriting this with respect to $ t $, we get
	\begin{equation}\label{eq_rBter}
		\begin{split}
			t&\leq 1+t\\
			&\leq\frac{1}{B^{d+1}}\cdot\frac{r^{d+1}}{\ln(e+r)}\leq\frac{1}{C'C''}\cdot\frac{r^{d+1}}{\ln(e+r)}.
		\end{split}
	\end{equation}
	
	\noindent Finally, we take $ C:=\max\lbrace 2S_{0}, y_{0}, A\rbrace>0 $. Then for any $ t\geq0 $ and any $ r\geq C[(1+t)\ln(e+t)]^{1/(d+1)} $, summing up \eqref{eq_mrtrp} with the choice $ p=k(r) $, \eqref{eq_rBter}, and \eqref{eq_rqkl}, we obtain \eqref{eq_rn+2}:
	\begin{equation*}
		\begin{split}
			m_{r}(t)&\leq\frac{(C')^{k}k^{k}t^{k}}{r^{k(d+1)+d-1}}\\
			&\leq\frac{(C')^{k}}{r^{k(d+1)+d-1}}\cdot\frac{[C''\ln(e+r)]^{k}}{r^{q-d+1}}\cdot\frac{1}{(C'C'')^{k}}\cdot\frac{r^{k(d+1)}}{[\ln(e+r)]^{k}}=\frac{1}{r^{q}}.
		\end{split}
	\end{equation*}
\end{proof}

\subsection{Proof of Theorem \ref{thm:3}}\label{subsec_pfthm}

Finally, we finish this paper by showing Theorem \ref{thm:3}.

\begin{proof}[Proof of Theorem \ref{thm:3}]
	It suffices to prove the estimate \eqref{eq_StestinRd};
	$$ S(t)\leq C[(1+t)\ln(e+t)]^{1/(d+1)}\quad\text{for any}\quad t\geq0, $$
	thanks to Remark \ref{rmk_actualest} and the BKM criterion. We let $ t\geq0 $. First, from Lemma \ref{lem_mafmar}, there exists $ C'>0 $ 
	such that for any $ t\geq0 $ and $ (r,z)\in\Pi $, we have \eqref{eq_urclaim1}:
	\begin{equation}\label{eq_urgeneral}
		|u^{r}(t,r,z)|\leq C'\bigg[\frac{1}{r^{d}}+(r^{d-2}+1)m_{r/2}(t)^{1/2}\bigg].
	\end{equation}
	In addition, choosing $ q=4d-4 $ 
	from Proposition \ref{prop_mafmar}, we can take $ C''>0 $ such that for any \linebreak 
	$ r\geq C''[(1+t)\ln(e+t)]^{1/(d+1)} $, we have
	\begin{equation}\label{eq_mr2min}
		m_{r/2}(t)\leq\bigg(\frac{2}{r}\bigg)^{4d-4}.
	\end{equation}
	Thus, combining \eqref{eq_urgeneral} and \eqref{eq_mr2min}, we can choose $ C_{0}:=\max\lbrace C'', C'(1+2\cdot2^{2d-2})\rbrace>0 $ such that for any 
	\linebreak $ r\geq C_{0}[(1+t)\ln(e+t)]^{1/(d+1)} $, we have
	\begin{equation*}
		\begin{split}
			|u^{r}(t,r,z)|&\leq C'\bigg[\frac{1}{r^{d}}+(r^{d-2}+1)m_{r/2}(t)^{1/2}\bigg]\\
			&\leq C'\bigg[\frac{1}{r^{d}}+2r^{d-2}\bigg(\frac{2}{r}\bigg)^{2d-2}
			\bigg]=\frac{C_{0}}{r^{d}}.
		\end{split}
	\end{equation*}
	Here, we denote $ \calF(t):=[(1+t)\ln(e+t)]^{1/(d+1)} $. Then note that for any $ r\geq C_{0}\calF(t) $, we have
	\begin{equation*}
		\begin{split}
			|u^{r}(t,r,z)|&\leq \frac{C_{0}}{r^{d}}\leq\frac{1}{C_{0}^{d-1}\calF(t)^{d}}\\
			&\leq\frac{d+1}{C_{0}^{d-1}}\calF'(t).
		\end{split}
	\end{equation*}
	Then we take $ C:=\max\lbrace C_{0}, (d+1)/C_{0}^{d-1}, S_{0}\rbrace>0 $, which gives us for any 
	$ r\geq C\calF(t) $,
	\begin{equation}\label{eq_urRc't}
		|u^{r}(t,r,z)|\leq \frac{d+1}{C_{0}^{d-1}}\calF'(t)
		\leq [C\calF(t)]'.
	\end{equation}
	This proves \eqref{eq_StestinRd}; for any $ x\in\supp\omg_{0} $, we have
	\begin{equation}\label{eq_goalofthm3}
		\Phi_{t}^{r}(x)\leq C\calF(t),
	\end{equation}
	where $ \Phi_{t}(\cdot)=\Phi(t,\cdot) $ is the flow map that uniquely solves the flow map ODE
	\begin{equation}\label{eq_flowmapode}
		\frac{d}{dt}\Phi_{t}(x)=u\big(t,\Phi_{t}(x)\big),\quad \Phi_{0}(x)=x,
	\end{equation}
	since we have $ \Phi_{0}^{r}(x)\leq C $ and \eqref{eq_urRc't} implies
	$$ \bigg|\frac{d}{dt}\Phi_{t}^{r}(x)\bigg|=\big|u^{r}\big(t,\Phi_{t}(x)\big)\big|\leq [C\calF(t)]'\quad\text{if}\quad \Phi_{t}^{r}(x)=C\calF(t)\quad\text{happens for some}\quad t\geq0. $$
\end{proof}

\subsection*{Acknowledgments}

The author wishes to thank Prof. Kyudong Choi and Prof. In-Jee Jeong for providing helpful discussions and references in writing this paper.

\bibliographystyle{plain}
\bibliography{CJL}

\end{document}